\DeclareMathOperator{\lip}{Lip}
\DeclareMathOperator{\id}{id}
\DeclareMathOperator{\conv}{conv}
\newcommand{\norm}[1]{\left\|#1\right\|}
\newcommand{\mc}[1]{\mathcal{#1}}
\DeclareMathOperator{\Span}{Span}
\DeclareMathOperator{\sgn}{\textnormal{sgn}}
\newcommand{\set}[1]{\left\{#1\right\}}
\newcommand{\R}{\mathbb{R}}
\newcommand{\br}[1]{\left(#1\right)}
\newcommand{\N}{\mathbb{N}}
\newcommand{\Q}{\mathbb{Q}}
\newcommand{\wt}[1]{\widetilde{#1}}
\newtheorem{proposition}{Proposition}[section]
\newtheorem{corollary}[proposition]{Corollary}
\newtheorem{theorem}[proposition]{Theorem}
\newtheorem{lemma}[proposition]{Lemma}
\theoremstyle{definition}
\newtheorem{definition}[proposition]{Definition}
\theoremstyle{remark}
\newtheorem{remark}[proposition]{Remark}
\newtheorem*{remark*}{Remark}
\newtheorem{example}[proposition]{Example}
\title{On extremal nonexpansive mappings}
\author{Christian Bargetz \and Michael Dymond \and Katriin Pirk}
\begin{document}
\maketitle
\begin{abstract}
  \noindent\textbf{\textsf{Abstract.}} We study the extremality of nonexpansive mappings on a nonempty bounded closed and convex subset of a normed space (therein specific Banach spaces). We show that surjective isometries are extremal in this sense for many Banach spaces, including Banach spaces with the Radon-Nikodym property and all $C(K)$-spaces for compact Hausdorff $K.$ We also conclude that the typical, in the sense of Baire category, nonexpansive mapping is close to being extremal. 
  \vskip2mm
  \noindent\textbf{\textsf{Keywords.}} Banach space, generic property, nonexpansive mapping.
  \vskip2mm
  \noindent\textbf{\textsf{Mathematics Subject Classifications.}} 46B25, 47H09, 54E52

\end{abstract}
\section{Introduction}
While in Hilbert spaces it is easy to see that every isometry is an affine mapping, in Banach spaces the situation is more complicated. The classical Mazur-Ulam Theorem, see~\cite{MazurUlam}, states that every \emph{surjective} isometry between normed spaces is an affine map. This result has been extended to surjective isometries of convex bodies in normed spaces by P.~Mankiewicz in Theorem~5 of~\cite{Mankiewicz}.

Given a bounded, closed and convex subset $C$ of a Banach space $X$, the space $\mathcal{M}$ of nonexpansive self-mappings of $C$ is usually equipped with the metric of uniform convergence which turns it into a complete metric space. An alternative point of view is to consider this space as a convex subset of the space $\mathcal{CB}(C,X)$ of bounded continuous mappings from $C$ to $X$. 

Since $C$ is bounded, $\mathcal{M}$ is also a bounded subset of $\mathcal{CB}(C,X)$. In many cases, e.g. when the Krein-Milman Theorem can be applied or more general for a space with the Krein-Milman property, convex sets can be described by their extreme points. One of the goals of the current article is to shed some light on the set of extreme points of $\mathcal{M}$ and to investigate how much information on the whole set is contained in the set of extreme points. In particular, we ask the question of whether surjective isometries are extreme points and provide the following answer in a wide class of Banach spaces.

\begin{theorem}\label{thm:collect}
  Let $X$ be a Banach space satisfying at least one of the following conditions:
  \begin{enumerate}[(i)]
  \item\label{almexp} The closed unit ball $\mathbb{B}_{X}$ is the closed convex hull of its almost exposed points.
  \item\label{CK} $X=C(K)$ for some compact Hausdorff topological space $K$.
  \item\label{czero} $X=c_0$.
  \end{enumerate}
  Then every surjective isometry $\mathbb{B}_{X}\to \mathbb{B}_{X}$ is extremal in the space of non-expansive mappings $\mathbb{B}_{X}\to \mathbb{B}_{X}$.
\end{theorem}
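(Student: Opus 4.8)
The plan is to show that if a surjective isometry $T\colon\mathbb{B}_{X}\to\mathbb{B}_{X}$ can be written as $T=\frac{1}{2}(g+h)$ with $g,h\in\mathcal{M}$, then necessarily $g=h=T$. The starting point is the rigidity forced by this averaging identity together with the fact that $T$ is an isometry. Fixing $x,y\in\mathbb{B}_{X}$ and writing $a=g(x)-g(y)$, $b=h(x)-h(y)$, nonexpansiveness gives $\norm{a},\norm{b}\le\norm{x-y}$, whereas $\norm{\frac{1}{2}(a+b)}=\norm{T(x)-T(y)}=\norm{x-y}$. The resulting chain $\norm{x-y}\le\frac{1}{2}\norm{a}+\frac{1}{2}\norm{b}\le\norm{x-y}$ collapses to equalities, so $\norm{a}=\norm{b}=\norm{x-y}$ and $\norm{a+b}=\norm{a}+\norm{b}$. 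Choosing by Hahn--Banach a norm-one functional $f$ with $f(a+b)=\norm{a+b}$ and noting $a+b=2(x-y)$, I would record the key consequence: \emph{every} $f$ in the unit sphere of $X^{*}$ with $f(x-y)=\norm{x-y}$ satisfies $f\br{g(x)-g(y)}=f\br{h(x)-h(y)}=\norm{x-y}$; in particular $g$ and $h$ are themselves isometric embeddings of $\mathbb{B}_{X}$.

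Next I would feed the exposed points of $\mathbb{B}_{X}$ into this. If $e$ is exposed by $f$ (so $\norm{f}=1$, $f(e)=1$, and $e$ is the unique point of $\mathbb{B}_{X}$ with $f$-value $1$) and $x-y=\lambda e$ for some $\lambda>0$, then $f$ supports $x-y$, so the previous step yields $f\br{g(x)-g(y)}=\lambda$ while $\norm{g(x)-g(y)}=\lambda$. Hence $\lambda^{-1}\br{g(x)-g(y)}\in\mathbb{B}_{X}$ has $f$-value $1$ and must equal $e$, giving $g(x)-g(y)=x-y$, and likewise $h(x)-h(y)=x-y$. Consequently the continuous map $\Phi:=g-h$ is constant along every segment of $\mathbb{B}_{X}$ parallel to an exposed point.

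The crux is to upgrade this to $\Phi$ being globally constant, and then zero. From the fact that the closed convex hull of the (almost) exposed points is $\mathbb{B}_{X}$ one gets that the exposed directions span a dense subspace of $X$; inside the open unit ball any two sufficiently close points are joined by a short chain of exposed-direction segments, so $\Phi$ is locally constant there, hence constant on the connected open ball and, by continuity, on $\mathbb{B}_{X}$. If $\Phi\equiv c$ then $g=T+\frac{c}{2}$, so surjectivity of $T$ gives $\mathbb{B}_{X}+\frac{c}{2}=g(\mathbb{B}_{X})\subseteq\mathbb{B}_{X}$; iterating forces $c=0$ and hence $g=h=T$. I expect this propagation step to be the main obstacle, compounded by the fact that condition~(i) only supplies \emph{almost} exposed points: the exposedness argument above must be replaced by its approximate form, in which the nearly-exposing functionals force $g(x)-g(y)$ to lie within $\varepsilon$ of $x-y$, and equality is recovered in a limit.

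Finally, cases~(ii) and~(iii) should be treated separately, since $\mathbb{B}_{C(K)}$ typically has very few exposed points and $\mathbb{B}_{c_{0}}$ has none at all, so~(i) is unavailable. Here I would run the analogue of the functional argument using the extreme points of the dual ball in place of exposed points: the evaluations $\delta_{t}$ for $C(K)$ and the coordinate functionals $e_{n}^{*}$ for $c_{0}$. Each such functional only controls the single coordinate of $g(x)-g(y)$ where $x-y$ attains its norm, so the difficulty shifts to aggregating this coordinatewise information over all of $K$ (respectively over all indices $n$) and over many pairs $x,y$, using continuity and the fact that every element attains its norm on a nonempty (for $c_{0}$, finite) set of coordinates, to conclude again that $g=h=T$.
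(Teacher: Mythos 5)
Your strategy for case (i) is close in spirit to the paper's proof, but as written it contains a genuine error and leaves the other two cases unproved. The error: from $T=\frac12(g+h)$ you get $a+b=2\bigl(T(x)-T(y)\bigr)$, \emph{not} $2(x-y)$, so your ``key consequence'' (that every functional norming $x-y$ also norms $g(x)-g(y)$) is false for a general surjective isometry $T$. What is missing is the reduction to $T=\id$: by Mankiewicz's theorem (Lemma~\ref{lem: Mankiewicz}) a surjective isometry of $\mathbb{B}_X$ is the restriction of a linear map, so one may apply $T^{-1}$ to the convex combination and prove extremality of the identity only; the paper isolates exactly this as Lemma~\ref{lemma:id_reduction} and uses it in all three cases. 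Without it, your exposed-point step only yields $g(x)-g(y)=T(x)-T(y)$ along segments that $T$ maps onto exposed directions, and you have no information about how $T$ acts on directions. A second, smaller gap: your proposed treatment of \emph{almost} exposed points (``nearly-exposing functionals \ldots within $\varepsilon$ \ldots recovered in a limit'') does not match the notion the theorem uses. By Definition~\ref{def:almexp}, $e$ is almost exposed when the intersection of \emph{all} hyperplanes supporting $\mathbb{B}_X$ at $e$ is $\{e\}$; the correct argument is exact, not approximate: each supporting functional $\varphi$ at $e$ places the increment of $g$ in $\mathbb{B}_X\cap\{y\colon\varphi(y)=\varphi(e)\}$, and intersecting over all supporting $\varphi$ pins it to $e$. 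This is precisely what the paper does in Theorem~\ref{thm:GeneralSurjIsom}. (Your ``chain of exposed-direction segments'' propagation also needs care, since nearby points of the ball need not be joined by such a chain; but the density-plus-continuity idea behind it is sound, and the paper's version --- propagating along convex combinations of almost exposed points starting from $0$, then using that the only translation preserving $\mathbb{B}_X$ is trivial --- is the cleaner form of your endgame.)

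The most serious gap concerns cases (ii) and (iii), which your proposal only diagnoses rather than proves. You correctly identify the easy half: evaluation/coordinate functionals force $g$ and $h$ to agree with the identity at points where the function attains the value $\pm1$; this is the paper's Proposition~\ref{prop: F,G as id+-1} (and Lemma~\ref{lem: T-pi-eps, f,g} for $c_0$). But the heart of the matter is the converse: a nonexpansive self-map of $\mathbb{B}_{C(K)}$ that fixes all such ``peak values'' must be the identity everywhere. The paper proves this (Proposition~\ref{prop: G is sur isom all K}) by a Urysohn-lemma construction: if $G(f_0)(x_0)\neq f_0(x_0)$, one builds $g_\pm\in\mathbb{B}_{C(K)}$ equal to $f_0$ off a small neighbourhood of $x_0$, equal to $\pm1$ at $x_0$, with $\norm{f_0-g_\pm}\leq 1\mp f_0(x_0)+\gamma$; since $G$ must fix $g_\pm$ at $x_0$, one gets $\norm{G(f_0)-G(g_\pm)}>\norm{f_0-g_\pm}$, contradicting nonexpansiveness. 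Your sketch explicitly labels this aggregation step ``the difficulty'' without supplying a mechanism for it, so (ii) and (iii) remain open in your proposal; the paper settles $c_0$ by the analogous argument, packaged inside a full characterisation (Theorem~\ref{thm: T extr iff phi=1}) of the extremal linear maps on $\mathbb{B}_{c_0}$.
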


In condition~\eqref{almexp} of Theorem~\ref{thm:collect} the notion of an almost exposed point is weaker than the standard notion of an exposed point, although we do not know whether the two notions may coincide. We postpone further details and discussion on this until Definition~\ref{def:almexp}. For now, we note that all Banach spaces with the Radon-Nikodym property satisfy the condition~\eqref{almexp} of Theorem~\ref{thm:collect}. Consequently, Theorem~\ref{thm:collect} captures all $\ell_{p}$ and $L^{p}[0,1]$ spaces with $1< p< \infty$. We also capture $\ell_{1}$ (via \eqref{almexp}) and $c_{0}$, as well as all $C(K)$ spaces. Since $\ell_\infty$ and $L_\infty[0,1]$ are isometrically isomorphic to $C(K)$-spaces, the theorem applies to them as well. Hence, arguably the only `classical' Banach space which evades Theorem~\ref{thm:collect} is $L^1[0,1]$. We do not know whether surjective isometries of the closed unit ball of $L^1[0,1]$ are extremal.

In the linear case this question is well-studied, i.e. the question of finding description of the extreme points of the unit ball of the space $L(X,X)$ of bounded linear operators, called extreme contractions operators or extreme contractions, has been considered for a number of Banach spaces~$X$. Let us briefly mention a number of these results. In~\cite{BLP} R.~M.~Blumenthal, J.~Lindenstrauss and R.~R.~Phelps provide the following characterisation of extreme contraction operators between $C(K)$-spaces: A norm-one operator $T\colon C(K_1)\to C(K_2)$ is an extreme point of the unit ball if and only if there is a continuous function $\psi\colon K_2\to K_1$ and $\varphi\in C(K_2)$ with $|\varphi|=1$ such that $(Tf)(t) = \varphi(t) f(\psi(t))$. Since every surjective linear isometry is of this form where $\psi$ is even a homeomorphism, in particular surjective linear isometries are extreme contraction operators. In~\cite{Kim1976:ExtremeContraction}, the extreme contraction operators on~$\ell_\infty$ are characterised by order theoretic and algebra properties. For more results on extreme contraction operators and in particular on the question of when all of them are isometries, we refer the interested reader to~\cite{NPN,Reich,Sain,SPM} and the references therein.

In the context of fixed point theory, the subsets of strict contractions within spaces of nonexpansive mappings are of clear significance. Banach's Fixed Point Theorem guarantees the existence of a unique fixed point of each strict contraction between complete metric spaces. Existence of a fixed point is an extremely useful property and has numerous applications in diverse areas of mathematics, for example in iterated function systems in fractal geometry and in solving partial differential equations. Therefore, it is a matter of high interest to determine to what extent the fixed point property extends beyond just the set of strict contractions, most obviously to the natural complete metric space of mappings in which this set lives, namely spaces of nonexpansive mappings, i.e. mappings with Lipschitz constant at most one. For more details on the fixed point theory of nonexpansive mappings, we refer the interested reader to~\cite{GR:Nonexpansive} and~\cite{FixedPointTheory}.

Whilst it is clear that mappings between two complete metric spaces with Lipschitz constant one need not have a fixed point (consider, for example a translation of the real line), it has been shown that in an important setting the vast majority of nonexpansive mappings retain the fixed point property. 
In finite dimensional spaces, Brouwer's Fixed Point Theorem provides a fixed point for every continuous self mapping of any given bounded, closed and convex set. This incredibly strong result is not valid in infinite dimensional spaces. However, inside the space of nonexpansive self mappings of a bounded, closed and convex subset of a general (possibly infinite dimensional) Banach space, results of de Blasi and Myjak, see~\cite{DM1976Convergence, DM1989Porosity}, guarantee the existence of a fixed point for every nonexpansive mapping except for those forming a negligible subset of the space of nonexpansive mappings. 

Of course, the previous statement only has power beyond the scope of Banach's Fixed Point Theorem if the subset of strict contractions forms a negligible subset of the space of nonexpansive mappings and the latter is indeed verified by the first two named authors in \cite{bargetz2016sigma}. Reich and Zaslavski, see~\cite{RZ2001NoncontractiveMappings}, showed that in this setting most nonexpansive mappings satisfy the assumptions of Rakotch's fixed point theorem, see~\cite{Rak1962Contractive}. When combined these results show that the phenomenon exhibited by de Blasi and Myjak can be explained by Rakotch's fixed point theorem but not by the one of Banach. It turns out that for these results to hold true, boundedness of $C$ is essential. If $C$ is an unbounded closed and convex subset of a Hilbert space, F.~Strobin showed in~\cite{Strobin} that the set of Rakotch contractions is in fact a meagre subset of the space of nonexpansive self-mappings. This result was later generalised to Banach spaces by Reich, Thimm and the first author in~\cite{BRT} and in a more general setting by Ravasini in~\cite{Ravasini}. On the other hand Reich and Zaslavski showed in~\cite{RZ-unbounded} that also in the unbounded case the property of having a fixed point is still typical. 

The present article is motivated in part by the observation that any nontrivial convex combination of elements in $\mathcal{M}$, that is any mapping of the form
\begin{equation}\label{eq:nontrivial_convex}
	(1-\lambda)g+\lambda h, \qquad \text{with $\lambda\in (0,1)$ and $g\neq h$,}
\end{equation}
is a sum of two strict contractions. Since strict contractions form a negligible subset of $\mathcal{M}$, we consider the question of whether the collection of all non-trivial convex combinations in $\mc{M}$ is also small in some sense. We show that for nearly all mappings $f\in \mc{M}$ there is only a tiny subset of $\mc{M}$ of mappings $g$ which may participate in a convex combination of the form \eqref{eq:nontrivial_convex} giving $f$.
\begin{restatable}{theorem}{Pf}\label{thm:Pf}
  Let $X$ be normed space, $C\subseteq X$ be a non-empty, non-singleton, closed, bounded and convex set and $\mc{M}=\mc{M}(C)$ be the space of nonexpansive mappings $C\to C$ equipped with the metric $d_{\infty}(f,g)=\sup_{x\in C}\norm{f(x)-g(x)}$. For each $f\in \mc{M}$ let 
  \begin{equation*}
    P_{f}:=\set{g\in\mc{M}\colon \exists \lambda\in [0,1),\, \exists h\in \mc{M}\,\text{s.t.}\,f=(1-\lambda)g+\lambda h}
  \end{equation*}
  and let $\mc{N}$ denote the set of strict contractions $C\to C$. Then the following statements hold:
  \begin{enumerate}[(i)]
  \item\label{M-Pf_convex} For every $f\in\mc{M}$, $P_{f}$ is $F_{\sigma}$, $\mc{M}\setminus P_{f}$ is convex and there exists an affine subspace $A_{f}$ of the space of continuous mappings $C\to X$ such that $P_{f}=\mc{M}\cap A_{f}$.    
  \item\label{lipf=1} For every $f\in\mc{M}\setminus \mc{N}$, i.e. every $f\in\mc{M}$ with $\lip(f)=1$, we have that $P_{f}$ is a $\sigma$-upper porous subset of $\mc{M}$. In fact $P_{f}$ is a countable union of closed, upper porous subsets of $\mc{M}$.
  \end{enumerate}
\end{restatable}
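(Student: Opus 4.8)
The plan is to base everything on the one-parameter family of sets
\[
  P_f^{t}:=\set{g\in\mc{M}\colon (1+t)f-tg\in\mc{M}},\qquad t>0.
\]
Writing $f=(1-\lambda)g+\lambda h$ with $\lambda\in(0,1)$ and solving for $h$ shows that $g\in P_f$ (with $g\neq f$) precisely when $h=(1+t)f-tg\in\mc{M}$ for $t=\tfrac{1-\lambda}{\lambda}>0$; since trivially $f\in P_f^{t}$, we obtain $P_f=\bigcup_{t>0}P_f^{t}$. Two elementary observations drive part~\eqref{M-Pf_convex}. First, for $0<t'\le t$ one has $(1+t')f-t'g=(1-\tfrac{t'}{t})f+\tfrac{t'}{t}h$, a convex combination of $f$ and $h$; hence $P_f^{t}\subseteq P_f^{t'}$, and as each $P_f^{t}$ is the intersection of $\mc{M}$ with the preimage of the closed set $\mc{M}$ under the continuous affine map $g\mapsto(1+t)f-tg$, it is closed. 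Therefore $P_f=\bigcup_{n}P_f^{1/n}$ is $F_\sigma$.

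For the remaining assertions of part~\eqref{M-Pf_convex} I would argue directly. For convexity of $\mc{M}\setminus P_f$, suppose $p,q\in\mc{M}\setminus P_f$ but $r=\mu p+(1-\mu)q\in P_f$ for some $\mu\in(0,1)$; then $f=\alpha h'+\beta p+\gamma q$ with strictly positive weights summing to $1$ and $h'\in\mc{M}$, and regrouping the two terms other than $p$ exhibits $f$ as a convex combination of $p$ with a point of $\mc{M}$, forcing $p\in P_f$, a contradiction. For the affine subspace, let $K=\set{u\colon \exists\,\varepsilon>0,\ f+\varepsilon u\in\mc{M}}$ be the cone of feasible directions of $\mc{M}$ at $f$ inside the space of continuous maps $C\to X$; convexity of $\mc{M}$ makes $K$ a convex cone, so $L:=K\cap(-K)$ is a linear subspace. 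Every $g\in\mc{M}$ satisfies $g-f\in K$, and membership in $P_f$ adds exactly the requirement $f-g\in K$; hence $g\in P_f\iff g\in\mc{M}$ and $f-g\in L$, i.e.\ $P_f=\mc{M}\cap A_f$ with $A_f=f+L$.

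The substance of the theorem is part~\eqref{lipf=1}, for which it suffices, by the decomposition above, to show that each closed set $P_f^{t}$ is upper porous. Fix $t>0$ and $g\in P_f^{t}$, and write $h=(1+t)f-tg\in\mc{M}$. Since $\lip(f)=1$, for any prescribed tolerance there is a pair $x_0\neq y_0$ with $\norm{f(x_0)-f(y_0)}$ arbitrarily close to $\norm{x_0-y_0}$; writing $f=\tfrac{1}{1+t}h+\tfrac{t}{1+t}g$ and saturating the triangle inequality forces both $g$ and $h$ to be almost isometric and almost aligned with $f$ on such a pair. The idea is to perturb $g$ to a nearby $g'\in\mc{M}$ that decreases $\norm{g(x_0)-g(y_0)}$ by a definite amount; testing a norming functional for $f(x_0)-f(y_0)$ then shows that $h'=(1+t)f-tg'$ becomes strictly expansive on $(x_0,y_0)$, with a margin proportional to the decrease. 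This margin survives replacing $g'$ by any $g''\in\mc{M}$ within a suitable radius $\rho$, so the whole ball $B(g',\rho)$ misses $P_f^{t}$; choosing the perturbation so that $d_\infty(g,g')$ and $\rho$ are both comparable to the decrease produces, near every $g$, holes of relative size bounded below by a constant depending only on $t$, which is exactly upper porosity.

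The main obstacle is the construction of the perturbation $g'$. A global contraction $g'=(1-\mu)g+\mu c_0$ towards a point is the obvious nonexpansive way to shrink $\norm{g(x_0)-g(y_0)}$, but it displaces $g$ by up to $\mu\diam(C)$ while shrinking the relevant distance only by $\approx\mu\norm{x_0-y_0}$; when the near-tight pairs of $f$ are short (as happens, e.g., when $f$ attains Lipschitz constant $1$ only in an infinitesimal limit), this ratio degenerates and yields no porosity. One must instead reduce $\norm{g(x_0)-g(y_0)}$ by a \emph{localized} modification whose sup-norm size is comparable to the reduction, in the spirit of the one-dimensional move that flattens a nonexpansive function along the segment $[x_0,y_0]$ and recovers it over a short adjacent interval. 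Carrying this out while keeping $g'$ nonexpansive in a general normed space---controlling the transition region so that no other, possibly tight, pair is expanded, and ensuring that $g'$ still maps into $C$---is the delicate heart of the argument, and it is here that the boundedness of $C$ is used.
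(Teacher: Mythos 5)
Your part~(i) is correct and takes a genuinely different, in places cleaner, route than the paper. The paper obtains the $F_\sigma$ property from the decomposition $P_f=\set{f}\cup\bigcup_{q\in\Q\cap(0,1/2)}P_{f,q}$, where $P_{f,q}$ collects those $g$ admitting a witness $\lambda\in[q,1-q]$, and proves closedness of each $P_{f,q}$ by passing to convergent subsequences of the witnesses $(\lambda_n,h_n)$; your sets $P_f^t$ are closed for free, being intersections of $\mc{M}$ with preimages of the closed set $\mc{M}$ under continuous affine maps, and your monotonicity observation $P_f^{t}\subseteq P_f^{t'}$ for $t'\le t$ correctly yields $P_f=\bigcup_n P_f^{1/n}$. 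Likewise your identification $A_f=f+L$, with $L=K\cap(-K)$ the lineality space of the cone of feasible directions at $f$, is slicker than the paper's construction $A_f=f+\Span\set{g-f\colon g\in P_f}$, which requires an induction over a minimal linearly independent spanning family; your key equivalence (for $g\in\mc{M}$, membership in $P_f$ is exactly $f-g\in K$, while $g-f\in K$ is automatic) is exactly right. The regrouping argument for convexity of $\mc{M}\setminus P_f$ is the same as the paper's.

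Part~(ii), however, has a genuine gap, and you have named it yourself: the localized perturbation $g'$ is never constructed, and it is precisely the heart of the proof --- everything from ``The idea is to perturb $g$\dots'' onwards is a correct quantitative skeleton hanging on a construction you do not supply. The paper's solution is worth recording because it dissolves, rather than confronts, the difficulties you list (keeping $g'$ nonexpansive, not expanding other possibly tight pairs, keeping the range inside $C$): instead of modifying the \emph{values} of $g$, it modifies the \emph{argument}. By \cite[Lemma~2.4]{bargetz2016sigma} --- and this is a second point your sketch glosses over: one needs near-tight pairs of $f$ at \emph{arbitrarily small scales}, which is a nontrivial fact relying on convexity of $C$ --- there exist $x_0,y\in C$ with $\eta:=\norm{y-x_0}<\varepsilon$ and $\norm{f(y)-f(x_0)}>(1-\delta)\eta$. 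Let $R_{\eta,x_0}\colon X\to X$ send $x$ to $x-\eta(x-x_0)/\norm{x-x_0}$ when $\norm{x-x_0}>\eta$ and to $x_0$ otherwise; by a lemma of Medjic \cite[Lemma~4.2]{Medjic2022} this map is nonexpansive, it maps $C$ into $C$ (each value is a convex combination of $x$ and $x_0$), and it moves no point by more than $\eta$. The perturbation is the precomposition $\wt{g}:=g\circ R_{\eta,x_0}$: it lies in $\mc{M}$ automatically, satisfies $d_\infty(\wt{g},g)\le\eta<\varepsilon$, and collapses the pair entirely, $\wt{g}(y)=\wt{g}(x_0)$, so the shrinkage of $\norm{g(y)-g(x_0)}$ is achieved at a sup-norm cost comparable to $\eta$ --- exactly the ratio your sketch demands. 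With this one ingredient the rest of your argument closes: the paper shows that for every $g'$ in the ball of radius $\alpha d_\infty(\wt{g},g)$ around $\wt{g}$, any representation $f=(1-\lambda)g'+\lambda h$ with $h\in\mc{M}$ forces $\lambda>1-q$, which is the same estimate as your norming-functional step. Without it, there is no proof.
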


The rest of the paper is structured as follows: Section \ref{sec: Preliminaries} builds up the necessary preliminary knowledge: explains both the notation and the notions at the heart of this article. In Section \ref{sec:surj_isom} we present some general observations regarding the extremality of surjective isometries on a bounded, closed and convex subset, e.g. we prove Theorem~\ref{thm:collect}, (\ref{almexp}), i.e. show for Banach spaces that under some specific conditions, that include, for example, Banach spaces with the Radon-Nikodym property, surjective isometries are extremal. Thereafter, in Section \ref{sec:CK_c0}, we focus on Theorem~\ref{thm:collect}, (\ref{CK}) and (\ref{czero}) to show that under the additional assumption that the bounded, closed and convex set is the unit ball, surjective isometries are extremal in $C(K)$-spaces for compact Hausdorff $K$ and $c_0.$ In Section \ref{sec: extr of typ nonexp map} we investigate, in the sense of Baire Category Theorem, 
how often non-trivial convex combinations occur in the space of nonexpansive mappings $\mathcal{M}$ discussed above and as a conclusion prove Theorem \ref{thm:Pf}.
\section{Preliminaries}\label{sec: Preliminaries}
In the following we give a quick overview of the notation, key-notions and handy tools used in this article. In Sections \ref{sec:surj_isom} and \ref{sec:CK_c0} we focus on Banach spaces, in Section \ref{sec: extr of typ nonexp map} the approach is broader and we deal with normed spaces. We use standard notation, e.g. for a ball in metric space $M$ with center $a\in M$ and radius $r>0$, we write $B_M(a,r)$. We use $S_X$ to denote the unit sphere of a Banach space $X,$ and $\mathbb{B}_{X}$ to clearly distinguish the closed unit ball of a Banach space $X.$ For a vector space $Y$ we denote the linear span of its $n\in\N$ elements $y_1,\dots,y_n$ by $\Span\{y_1,\dots,y_n\}.$

Now let us introduce some of the core notions of this paper. Let $X$ be a normed space and $C\subseteq X$ be a non-empty, non-singleton, closed, bounded and convex subset. 
We call a mapping $F\colon C\to C$ \emph{nonexpansive} if for every $x,y\in X$
\begin{equation}
  \|F(x)-F(y)\|\leq \|x-y\|.
\end{equation}
In other words, a nonexpansive mapping $F$ is a Lipschitz mapping with Lipschitz constant 1 or less, i.e. $\lip{(F)}\leq 1$. We say that a nonexpansive mapping is a \emph{strict contraction} if its Lipschitz constant is strictly less than one.

Throughout the paper we denote by $\mathcal{M}=\mathcal{M}(C)$ the space of all nonexpansive mappings on $C.$ If $C$ is bounded, $\mathcal{M}$ is a complete metric space when equipped with the metric of uniform convergence. We denote the subset of all strict contractions in~$\mathcal{M}$ by~$\mathcal{N}.$

As is evident from the title of this paper, the main objects of interest in this article are \emph{extremal nonexpansive mappings}. Let us now clarify what we mean by extremality in this setting.
\begin{definition}
  We call a mapping $f\in \mathcal{M}$ \emph{extremal} if it does not have a representation as a nontrivial convex combination of two elements of $\mathcal{M},$ i.e. $f\in \mathcal{M}$ is said to be \emph{extremal} if $(1-\lambda) g+\lambda h\neq f$ for every $\lambda\in (0,1)$ and every $g,h\in \mathcal{M}\setminus\set{f}$. 

  Given a subset $\mc{F}$ of $\mc{M}$ we say that $f\in\mc{F}$ is extremal among mappings from $\mc{F}$ if $(1-\lambda) g+\lambda h\neq f$ for every $\lambda\in (0,1)$ and $g,h\in\mc{F}\setminus\set{f}$.
\end{definition}

The Sections \ref{sec:surj_isom} and \ref{sec:CK_c0} will be devoted to the proof of Theorem~\ref{thm:collect}. We show that surjective isometries of the closed unit ball are extremal in nearly all classical Banach space settings. Recall that a mapping $f\colon C \to C$ is an \emph{isometry} if it is distance preserving, i.e. for every $x,y\in C$ we have
\begin{equation}\label{eq: isometry}
  \|f(x)-f(y)\|=\|x-y\|.
\end{equation}
We say that $f\in \mathcal{M}$ is a \emph{surjective isometry} if $f\colon C\to C$ is an isometry and is onto. Although we do not require isometries to be linear by default, the following lemma (see Lemma 1 and Theorem 5 in \cite{Mankiewicz}) specifies conditions under which we naturally end up with linear surjective isometries. It is useful to note that our results can be divided into two. Some hold for isometries on any set $C$ and some look at the more restricted case where $C:=\mathbb{B}_X.$ Linearity of a surjective isometry in the latter case, where $C:=\mathbb{B}_{X}$, is assured:
\begin{lemma}\label{lem: Mankiewicz}
  Let $X$ be a Banach space and $T\colon \mathbb{B}_X\to \mathbb{B}_X$ a surjective isometry. Then $T$ is the restriction of a linear mapping $X\to X$.
\end{lemma}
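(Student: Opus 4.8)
The plan is to reduce the statement to Mankiewicz's extension theorem and then pin down the translation part of the resulting affine map. Since $\mathbb{B}_X$ is a convex body, i.e. a bounded, closed and convex set with nonempty interior, Theorem~5 of~\cite{Mankiewicz} applies directly to the surjective isometry $T\colon \mathbb{B}_X\to\mathbb{B}_X$ and yields a surjective \emph{affine} isometry $\widetilde{T}\colon X\to X$ whose restriction to $\mathbb{B}_X$ coincides with $T$. This is the analogue of the Mazur--Ulam theorem~\cite{MazurUlam} for convex bodies, and is where essentially all of the work is hidden. Thus it remains only to show that this affine extension is in fact linear.

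Writing $b:=\widetilde{T}(0)$ and $L:=\widetilde{T}-b$, the map $L\colon X\to X$ is affine, surjective and isometric, and satisfies $L(0)=0$. An affine map fixing the origin is linear, so $L$ is a surjective linear isometry; in particular it maps the unit ball onto itself, $L(\mathbb{B}_X)=\mathbb{B}_X$. The remaining task is therefore to verify that $b=0$.

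For this I would combine the surjectivity of $T$ with the boundedness of the ball. Since $\widetilde{T}$ restricts to $T$ and $T(\mathbb{B}_X)=\mathbb{B}_X$, we obtain
\[
  \mathbb{B}_X = \widetilde{T}(\mathbb{B}_X) = L(\mathbb{B}_X)+b = \mathbb{B}_X + b.
\]
Iterating this identity gives $\mathbb{B}_X + n b=\mathbb{B}_X$ for every $n\in\N$, so that $n b\in\mathbb{B}_X$ and hence $n\norm{b}\le 1$ for all $n$. This forces $\norm{b}=0$, i.e. $b=0$, whence $\widetilde{T}=L$ is linear and $T$ is its restriction, as claimed.

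The main obstacle is entirely contained in the appeal to Mankiewicz's theorem: extending a bare isometry of a convex body to a global affine isometry is the substantial step, relying on a Mazur--Ulam type midpoint argument adapted to sets with nonempty interior. Once that extension is in hand, identifying the translation vector is the only remaining point, and it is elementary, using only that $0\in\mathbb{B}_X$ and that $\mathbb{B}_X$ is bounded.
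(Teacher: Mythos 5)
Your proof is correct, and it rests on the same cornerstone as the paper's: Mankiewicz's Theorem~5 applied to the bijective isometry $T$ between the convex bodies $\mathbb{B}_X$ and $T(\mathbb{B}_X)=\mathbb{B}_X$. The difference is in how the affine extension is shown to be linear. The paper first invokes Lemma~1 of \cite{Mankiewicz} to conclude that $T(0)=0$, so that the affine extension supplied by Theorem~5 fixes the origin and is therefore automatically linear. You instead extend first and then eliminate the translation part by hand: writing $\widetilde{T}=L+b$ with $L$ linear, you use surjectivity of $T$ to get $\mathbb{B}_X=\widetilde{T}(\mathbb{B}_X)=L(\mathbb{B}_X)+b=\mathbb{B}_X+b$, iterate to obtain $nb\in\mathbb{B}_X$ for every $n\in\N$, and conclude $b=0$ from boundedness of the ball. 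This step is sound (indeed it is the same ``the only translation mapping the unit ball onto itself is the identity'' observation the paper itself uses later, in the proof of Theorem~\ref{thm:GeneralSurjIsom}). What your route buys is self-containedness: only one external result from \cite{Mankiewicz} is needed, with the remaining work elementary; the paper's route is shorter on the page but outsources the ``fixes the origin'' step to a second cited lemma.
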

\begin{proof}
  Using Lemma 1 in \cite{Mankiewicz} we see that $T$ fixes $0\in \mathbb{B}_X.$ This means that $T(\mathbb{B}_X)$ is a convex body. Now we apply Theorem 5 in \cite{Mankiewicz} and get that $T$ extends uniquely to an affine isometry from $X$ to $X.$ Therefore, $T$ is linear. 
\end{proof}

In Section \ref{sec: extr of typ nonexp map} we prove Theorem~\ref{thm:Pf}, which shows that non-trivial convex combinations are rare in a certain sense among nonexpansive mappings. There are different ways to describe how rare some type of elements are. In this paper we use the notion of \emph{$\sigma$-upper porosity}, due to Zajíček~\cite[Definition~2.1]{Zajicek2005}. The following definition is based on an equivalent formulation of Zajíček's definition, given by \cite[Lemma~2.2]{dymond2023porosity}
\begin{definition}
  Let $(M,d)$ be a metric space without isolated points, 
  \begin{enumerate}[(i)]
  \item We say that a set $P\subseteq M$ is upper porous at a point $q\in M$ if there exists $\alpha\in (0,1)$ such that for every $\varepsilon>0$ there exists $\wt{q}\in M$ such that $0<d(q,\wt{q})\leq \varepsilon$ and $B_{M}(\wt{q},\alpha d(q,\wt{q}))\cap P=\emptyset$. 
  \item We say that a set $P\subseteq M$ is lower porous at a point $q\in M$ if there exists $\alpha\in (0,1)$ and $\varepsilon_{0}>0$ such that for every $\varepsilon\in(0,\varepsilon_0)$ there exists $\wt{q}\in M$ such that $d(q,\wt{q})\leq \varepsilon$ and $B_{M}(\wt{q},\alpha\varepsilon)\cap P=\emptyset$.
  \item We call a set $P\subseteq M$ upper porous (respectiviely lower porous) if it is upper porous (respectively lower porous) at every point $q\in P$.
  \item A set $E\subseteq M$ is called $\sigma$-upper porous (respectively $\sigma$-lower porous) if it is a countable union of upper (respectively lower) porous subsets of $M$.
  \end{enumerate}

\end{definition}
Note that upper porosity is a weaker notion than lower porosity. Moreover, both notions are stronger than the notion of nowhere dense. Therefore, in a metric space the classes of $\sigma$-upper and of $\sigma$-lower porous sets lie within the class of meagre subsets. Thus, $\sigma$-upper and $\sigma$-lower porous subsets of a complete metric space should be thought of as even more negligible than meagre subsets or subsets of the first Baire Category, which are small due the Baire Category Theorem.
\section{General observations on the extremality of surjective isometries}\label{sec:surj_isom}

The main goal of this section is to prove Theorem~\ref{thm:collect}, (\ref{almexp}).

Before we investigate in which settings surjective isometries are extremal, let us observe that this property is equivalent to a seemingly weaker one.

\begin{lemma}\label{lemma:id_reduction}
  Let $X$ be a Banach space and $C\subset X$ be a closed and convex subset with nonempty interior. Let $\mc{M}$ denote the set of nonexpansive self mappings of $C$. Then every surjective isometry $C\to C$ is extremal in $\mc{M}$ if and only if the identity $\id\colon C\to C$ is extremal in $\mc{M}$.
\end{lemma}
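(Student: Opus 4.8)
The plan is to exploit the group structure of surjective isometries together with the elementary fact that \emph{pre}-composition (composition on the right) commutes with pointwise convex combinations, whereas post-composition in general does not. The forward implication is immediate: the identity $\id\colon C\to C$ is itself a surjective isometry, so if every surjective isometry is extremal in $\mc{M}$, then in particular $\id$ is extremal.

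For the reverse implication, assume $\id$ is extremal and let $T\colon C\to C$ be an arbitrary surjective isometry. First I would record that $T^{-1}$ exists, since $T$ is a bijection, and is again a surjective isometry of $C$; being an isometry it is in particular nonexpansive, so $T^{-1}\in\mc{M}$. Now suppose, towards establishing extremality of $T$, that $T=(1-\lambda)g+\lambda h$ for some $\lambda\in(0,1)$ and $g,h\in\mc{M}$. The key step is to compose on the right with $T^{-1}$. Since the convex combination is defined pointwise, for every $x\in C$ one has $\bigl((1-\lambda)g+\lambda h\bigr)(T^{-1}(x))=(1-\lambda)g(T^{-1}(x))+\lambda h(T^{-1}(x))$, that is, pre-composition distributes over the combination, and therefore
\[
  \id = T\circ T^{-1} = (1-\lambda)\,(g\circ T^{-1})+\lambda\,(h\circ T^{-1}).
\]
Both $g\circ T^{-1}$ and $h\circ T^{-1}$ lie in $\mc{M}$, being compositions of nonexpansive self-mappings of $C$. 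Extremality of $\id$ then forces $g\circ T^{-1}=h\circ T^{-1}=\id$, and composing on the right with $T$ yields $g=h=T$. Hence $T$ admits no nontrivial convex representation, i.e.\ $T$ is extremal.

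The only point requiring care is the side on which $T^{-1}$ is applied: the symmetric attempt of post-composing, $T^{-1}\circ\bigl((1-\lambda)g+\lambda h\bigr)$, would \emph{not} distribute over the convex combination, because $T^{-1}$ need not be affine on a general convex body $C$ (Mankiewicz-type linearity, as in Lemma~\ref{lem: Mankiewicz}, is only available for $C=\mathbb{B}_X$). Apart from this bookkeeping there is no genuine analytic obstacle, since no estimates or limiting arguments are needed; I would also note that the hypothesis that $C$ has nonempty interior does not appear to be used in this particular equivalence, and is presumably retained only for consistency with the surrounding results.
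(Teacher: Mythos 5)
Your proof is correct, but it takes a genuinely different --- and in fact more elementary --- route than the paper's. For the nontrivial direction the paper invokes Theorem~5 of \cite{Mankiewicz} to conclude that the surjective isometry $f$ is the restriction of an affine map, hence $f^{-1}$ is affine, and then composes with $f^{-1}$ \emph{on the left}: affinity is exactly what is needed so that $f^{-1}\circ\bigl((1-\lambda)g+\lambda h\bigr)=(1-\lambda)\,f^{-1}\circ g+\lambda\,f^{-1}\circ h$. You compose with $T^{-1}$ \emph{on the right} instead, where distribution over the pointwise convex combination is automatic, so no affinity and hence no Mankiewicz-type extension theorem is needed. This buys two things: the argument becomes purely set-theoretic (only bijectivity of $T$ and nonexpansiveness of $T^{-1}$ are used), and, as you correctly observe, the hypothesis that $C$ has nonempty interior becomes superfluous in your version, whereas the paper genuinely needs it, since Mankiewicz's Theorem~5 is a statement about convex bodies; your lemma thus holds for an arbitrary convex set $C$. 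The paper's left-composition is the pattern that recurs in its other arguments (e.g.\ Lemma~\ref{lem: surj isom extr among lin nonexp} and Proposition~\ref{prop:LinearCase}), where affinity or linearity is available anyway, but for this particular equivalence your right-composition is cleaner and strictly more general. The one step you leave implicit --- that extremality of $\id$ forces \emph{both} $g\circ T^{-1}=\id$ and $h\circ T^{-1}=\id$ (if one of them equals $\id$, the other does too, by cancelling $\lambda$) --- is stated at the same level of detail in the paper's own proof, so nothing essential is missing.
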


\begin{proof}
  The `only if' part is trivial. We prove the `if' statement: Suppose that $\id\in\mc{M}$ is extremal, let $f\in\mc{M}$ be a surjective isometry and suppose there exist $\lambda\in (0,1)$ and $g,h\in \mc{M}$ such that $f=(1-\lambda)g+\lambda h$. Our task is to show that $g=f$. By Theorem~5 in~\cite{Mankiewicz} we have that $f$ (and therefore also $f^{-1}$) is the restriction of an affine mapping $X\to X$. Therefore, applying $f^{-1}$ to the above convex representation of $f$, we get $\id=(1-\lambda)f^{-1}\circ g+\lambda f^{-1}\circ h$ and note that $f^{-1}\circ g,f^{-1}\circ h\in\mc{M}$. Since $\id$ is extremal in $\mc{M}$, we conclude that $f^{-1}\circ g=\id$ and therefore, that $g=f$.
\end{proof}

As a next general observation, we obtain that the identity is extremal among surjective nonexpansive mappings.

\begin{proposition}\label{prop:LinearCase}
  Let $X$ be a Banach space and $C\subset X$ be a bounded, closed and convex subset with nonempty interior. The identity is extremal among surjective isometries of~$C$.
\end{proposition}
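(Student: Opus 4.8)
The plan is to reduce everything to a statement about linear isometries. Suppose $\id=(1-\lambda)g+\lambda h$ with $\lambda\in(0,1)$ and $g,h$ surjective isometries of $C$; I must show $g=h=\id$. By Theorem~5 in~\cite{Mankiewicz} each of $g,h$ is the restriction of an affine isometry of $X$, say $g(x)=Ax+a$ and $h(x)=Bx+b$, where $A,B$ are linear isometries of $X$; since $C$ is a convex body and $g,h$ map onto $C$, the maps $A,B$ are in fact surjective (hence invertible) linear isometries. Comparing linear and constant parts in $\id=(1-\lambda)g+\lambda h$ then yields the two identities $(1-\lambda)A+\lambda B=\id$ and $(1-\lambda)a+\lambda b=0$. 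The goal becomes to show $A=B=\id$, after which $g$ reduces to the translation $x\mapsto x+a$; since $g(C)=C+a=C$ and $C$ is bounded, this forces $a=0$ (otherwise $C=C+na$ for every $n$ would be unbounded), and likewise $b=0$, giving $g=h=\id$.

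It remains to prove the purely linear assertion: if $A,B$ are surjective linear isometries of $X$ with $(1-\lambda)A+\lambda B=\id$ for some $\lambda\in(0,1)$, then $A=B=\id$. I expect this to be the heart of the argument and would argue spectrally (passing to the complexification of $X$ if $X$ is real, which preserves all the relevant hypotheses). Because $A$ is an invertible isometry, $\|A\|=\|A^{-1}\|=1$, so its spectrum $\sigma(A)$ lies on the unit circle, and similarly for $\sigma(B)$. The defining identity rewrites as $B=\tfrac1\lambda\,\id-\tfrac{1-\lambda}{\lambda}A=p(A)$ for the affine polynomial $p(z)=\tfrac1\lambda-\tfrac{1-\lambda}{\lambda}z$, so by the spectral mapping theorem $\sigma(B)=p(\sigma(A))$. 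Hence every $z\in\sigma(A)$ must satisfy both $|z|=1$ and $|p(z)|=1$; the latter reads $|1-(1-\lambda)z|=\lambda$, and squaring while using $|z|=1$ reduces this to $\cos\theta=1$ for $z=e^{i\theta}$, whose unique solution is $z=1$. Therefore $\sigma(A)=\{1\}$.

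Finally, $A$ is doubly power bounded, since $\|A^n\|=1$ for every $n\in\mathbb{Z}$, and has the single spectral point $1$. By Gelfand's theorem---an invertible operator with $\sup_{n\in\mathbb{Z}}\|A^n\|<\infty$ and $\sigma(A)=\{1\}$ must be the identity---I conclude $A=\id$, and then $B=\tfrac1\lambda\id-\tfrac{1-\lambda}{\lambda}\id=\id$ as well. Combined with the first paragraph this gives $g=h=\id$, proving that $\id$ is extremal among surjective isometries. The only genuinely nontrivial ingredient is the passage from $\sigma(A)=\{1\}$ together with double power boundedness to $A=\id$: in a general, possibly non-strictly-convex, Banach space there is no elementary ``alignment'' argument forcing $A=\id$ from the equality case of the triangle inequality, so invoking Gelfand's theorem (or reproving it via the quasinilpotence of $A-\id$) appears to be the essential step.
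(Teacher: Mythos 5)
Your proof is correct, and it reaches the conclusion by a genuinely different route than the paper. The reduction is identical: both apply Theorem~5 of \cite{Mankiewicz} to extend the isometries to affine bijections of $X$ and split off linear and translation parts (comparing parts uses that the identity $\id=(1-\lambda)g+\lambda h$, valid on $C$, extends to all of $X$ because $C$ has nonempty interior). The divergence is in the core linear statement. The paper (Lemmas~\ref{lem:LinearDual} and~\ref{lem: surj isom extr among lin nonexp}) passes to the adjoints $A^{*},B^{*}$, observes that every extreme point of $\mathbb{B}_{X^{*}}$ is fixed by both of them (an extreme point cannot be a nontrivial convex combination of two points of the dual ball), and then uses the Krein--Milman theorem in the dual together with weak$^{*}$-weak$^{*}$ continuity of adjoints to get $A^{*}=B^{*}=\id$, hence $A=B=\id$. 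Your argument instead complexifies, locates $\sigma(A)$ on the unit circle, applies the spectral mapping theorem to the affine polynomial $p$ to force $\sigma(A)=\{1\}$ (your computation that $z=1$ is the only unit-circle solution of $\left|1-(1-\lambda)z\right|=\lambda$ is correct), and then invokes Gelfand's theorem that a doubly power-bounded operator with spectrum $\{1\}$ is the identity. All steps are sound: invertible isometries are doubly power bounded, and a complexification with, say, the Taylor norm preserves the needed norm bounds. What each approach buys: yours is short modulo a deep classical black box (Gelfand's theorem, which is genuinely needed, since spectrum $\{1\}$ alone does not force the identity); the paper's is elementary and self-contained, needing only Krein--Milman (always available in the dual by Banach--Alaoglu), and its two lemmas are reused elsewhere. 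In fact your closing remark that no elementary argument is available in non-strictly-convex spaces is not quite right --- the paper's duality trick is exactly such an argument. One small point in your favour: you explicitly dispatch the translation parts (boundedness of $C$ and $C+a=C$ force $a=0$), a step the paper's proof of Proposition~\ref{prop:LinearCase} leaves implicit.
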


For the proof we need the following lemmas.

\begin{lemma}\label{lem:LinearDual}
  Let $X$ be a Banach space and $E$ be the set of all extreme points of the unit ball $B_{X}$. Assume that there are two linear operators $f,g\colon X\to X$ of norm at most one. Let $\lambda\in (0,1)$ such that $\mathrm{id} =  (1-\lambda) f + \lambda g$. Then, $f(x)=g(x)$ for all $x\in \operatorname{conv}(E)$.
\end{lemma}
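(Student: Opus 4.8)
The plan is to exploit the defining property of extreme points together with the norm bounds on $f$ and $g$. First I would evaluate the relation $\id = (1-\lambda)f + \lambda g$ at a single extreme point $e \in E$. Since $\|f\|, \|g\| \le 1$ and $\|e\| \le 1$, both images satisfy $\|f(e)\| \le \|f\|\,\|e\| \le 1$ and $\|g(e)\| \le 1$, so that $f(e), g(e) \in B_{X}$. The resulting identity $e = (1-\lambda)f(e) + \lambda g(e)$ then displays the extreme point $e$ as a convex combination of two points of $B_{X}$ with nontrivial weights $\lambda, 1-\lambda \in (0,1)$. By the definition of an extreme point this forces $f(e) = g(e) = e$.

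The second step is a routine appeal to linearity. Any $x \in \conv(E)$ is a finite convex combination $x = \sum_{i=1}^{n} \mu_i e_i$ with $e_i \in E$, $\mu_i \ge 0$ and $\sum_{i=1}^{n} \mu_i = 1$. Applying the linearity of $f$ and $g$ together with the pointwise equalities obtained in the first step yields $f(x) = \sum_{i=1}^{n} \mu_i f(e_i) = \sum_{i=1}^{n} \mu_i e_i = \sum_{i=1}^{n} \mu_i g(e_i) = g(x)$, which is exactly the claimed conclusion (and shows moreover that $f$ and $g$ both restrict to the identity on $\conv(E)$).

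I do not expect a genuine obstacle here. The only points meriting attention are the verification that $f(e), g(e) \in B_{X}$, which relies purely on the operator norm bounds, and the fact that elements of $\conv(E)$ are honest finite convex combinations, so linearity alone suffices and no limiting or continuity argument is required. Should one wish to upgrade the conclusion from $\conv(E)$ to the closed convex hull $\overline{\conv}(E)$, one would simply add that $f$ and $g$, being bounded operators, are continuous, and hence the equality persists under limits.
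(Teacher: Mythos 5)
Your proof is correct and follows essentially the same route as the paper's: evaluate the convex identity at each extreme point to force $f(e)=g(e)=e$, then extend to $\conv(E)$ by linearity. If anything, your version is slightly more careful, since you explicitly verify that $f(e),g(e)\in B_{X}$ before invoking extremality, a step the paper's proof leaves implicit.
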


\begin{proof}
  First note that for $y\in E$ we have $y = \lambda f(y) + (1-\lambda)g(y)$ which by extremality of $y$ means that $y=f(y)=g(y)$. Given $x\in \operatorname{conv}(E)$, there exist $m\in\N$, $y_{1},\ldots,y_{m}\in E$ and $\mu_{1},\ldots,\mu_{n}\in\R$ such that $x = \sum_{i=1}^{m} \mu_i y_i$
  and hence
  \[
    f(x)=\sum_{i=1}^{m} \mu_i f(y_i) = \sum_{i=1}^{m} \mu_iy_i = x
  \]
  and similar for $g(x)$ which finishes the proof.
\end{proof}

\begin{lemma}\label{lem: surj isom extr among lin nonexp}
  Let $X$ be a Banach space. The linear surjective isometries $X\to X$ are extremal among the linear nonexpansive mappings $X\to X$.
\end{lemma}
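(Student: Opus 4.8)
The plan is to prove the statement first for the identity and then transfer it to an arbitrary surjective isometry by conjugation, handling the identity case via a passage to the dual.

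First I would reduce to the identity. Suppose $T\colon X\to X$ is a linear surjective isometry and $T=(1-\lambda)f+\lambda g$ with $\lambda\in(0,1)$ and $f,g$ linear and nonexpansive. Since $T$ is a surjective isometry, $T^{-1}$ exists and is again a linear nonexpansive map of $X$; composing the relation $T=(1-\lambda)f+\lambda g$ on the left with $T^{-1}$ yields $\id=(1-\lambda)(T^{-1}\circ f)+\lambda(T^{-1}\circ g)$, where $T^{-1}\circ f$ and $T^{-1}\circ g$ are again linear and nonexpansive. Hence it suffices to show that $\id$ is extremal among linear nonexpansive maps: once $T^{-1}\circ f=T^{-1}\circ g=\id$ is established, we recover $f=g=T$.

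The naive approach is to invoke Lemma~\ref{lem:LinearDual}, which already gives $f=g$ on $\conv(E)$, where $E$ is the set of extreme points of $B_X$. The obstacle, and the reason this does not conclude the argument, is that $B_X$ may possess very few extreme points, and none at all for spaces such as $c_0$, so that $\conv(E)$ need not determine an operator on $X$. To circumvent this I would pass to the adjoint: taking adjoints in $\id=(1-\lambda)f+\lambda g$ gives $\id_{X^*}=(1-\lambda)f^*+\lambda g^*$ with $f^*,g^*$ linear and nonexpansive on $X^*$.

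The key point is that $B_{X^*}$ is weak$^*$-compact for every Banach space $X$, so by the Krein–Milman theorem it is the weak$^*$-closed convex hull of its extreme points. Rerunning the argument of Lemma~\ref{lem:LinearDual} in the dual, for each extreme point $x^*$ of $B_{X^*}$ the relation $x^*=(1-\lambda)f^*(x^*)+\lambda g^*(x^*)$ exhibits $x^*$ as a convex combination of the two points $f^*(x^*),g^*(x^*)$ of $B_{X^*}$, which by extremality forces $f^*(x^*)=g^*(x^*)=x^*$. Now the fixed-point set $\set{x^*\in X^*\colon f^*(x^*)=x^*}$ is convex and, crucially, weak$^*$-closed, since $f^*$ is weak$^*$-to-weak$^*$ continuous as an adjoint operator; containing every extreme point of $B_{X^*}$, it must by Krein–Milman contain all of $B_{X^*}$, whence $f^*=\id_{X^*}$ and, by homogeneity, $f=\id$. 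The symmetric argument gives $g=\id$, and together with the reduction this completes the proof. The main obstacle is exactly the possible scarcity of extreme points of $B_X$; it is resolved by transferring the problem to the dual unit ball, where both Krein–Milman and the weak$^*$-continuity of adjoints are available.
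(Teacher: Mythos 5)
Your proposal is correct and follows essentially the same route as the paper's proof: reduce to the identity by composing with $T^{-1}$, pass to the adjoints, apply the extreme-point argument of Lemma~\ref{lem:LinearDual} to the weak$^*$-compact ball $\mathbb{B}_{X^*}$, and use the weak$^*$-weak$^*$ continuity of adjoints together with the Krein--Milman theorem to conclude $f^*=g^*=\id$ and hence $f=g=\id$. Your phrasing via the weak$^*$-closed convex fixed-point set of $f^*$ is a slightly tidier way of stating the paper's extension step, but it is the same idea.
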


\begin{proof}
  Similarly to the proof of Lemma~\ref{lemma:id_reduction} we may conclude that extremality of the identity among linear nonexpansive self-mappings of $X$ implies that surjective isometries are extremal. Let $f,g\colon X\to X$ be nonexpansive linear mappings. Their adjoints $f^*$ and $g^*$ are linear nonexpansive mappings. By the Krein-Milman theorem $\mathbb{B}_{X^*}$ is the weak$^*$ closed convex hull of the set $E$ of its extreme points. By Lemma~\ref{lem:LinearDual} on $E$ the mappings $f^*$ and $g^*$ agree with the identity. Since they are adjoints of bounded linear mappings, they are weak$^*$-weak$^*$ continuous. Hence they each have a unique extension to the weak$^*$ closure of $\conv E$ and therefore $f^*=g^*=\operatorname{id}$ which in turn implies that $f=g=\operatorname{id}$.
\end{proof}

\begin{proof}[Proof of Proposition~\ref{prop:LinearCase}]
  Assume that there are surjective isometries $f,g\colon C\to C$ and $\lambda\in (0,1)$ such that $\operatorname{id} = (1-\lambda) f + \lambda g$. Since $f$ and $g$ are surjective isometries, by Theorem~5 in~\cite{Mankiewicz} they extend to affine and bijective isometries $X\to X$ which we again denote by $f$ and $g$. We denote by $f_0$ and $g_0$ their linear parts which are obviously surjective linear isometries. Now we may apply Lemma~\ref{lem: surj isom extr among lin nonexp} to conclude that that $f_0=g_0=\operatorname{id}$.
\end{proof}

We can gain some more information on mappings which are able to form a non-trivial convex combination which is an isometry.

\begin{proposition}
  Let $X$ be a Banach space and $C\subset X$ be a bounded, closed and convex set. Moreover let $f,g\in\mc{M}$ and $\lambda\in (0,1)$ such that $(1-\lambda) f +\lambda g$ is an isometry. Then $f$ and $g$ are isometries.
\end{proposition}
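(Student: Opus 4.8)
The plan is to exploit the triangle inequality together with the nonexpansiveness of $f$ and $g$ to produce a chain of inequalities which the isometry hypothesis forces to collapse into a chain of equalities. No feature of $C$ beyond its norm structure should be needed; in particular boundedness, closedness, convexity and completeness of $\mc{M}$ play no role, so the argument is entirely pointwise.

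First I would fix $x,y\in C$ and abbreviate $h:=(1-\lambda)f+\lambda g$. From the defining identity $h(x)-h(y)=(1-\lambda)\br{f(x)-f(y)}+\lambda\br{g(x)-g(y)}$ the triangle inequality yields $\norm{h(x)-h(y)}\leq (1-\lambda)\norm{f(x)-f(y)}+\lambda\norm{g(x)-g(y)}$. Since $f$ and $g$ are nonexpansive, each of the two norms on the right is at most $\norm{x-y}$, so the whole right-hand side is at most $\norm{x-y}$. On the other hand $h$ is an isometry, so $\norm{h(x)-h(y)}=\norm{x-y}$. Consequently every inequality in the chain $\norm{x-y}=\norm{h(x)-h(y)}\leq (1-\lambda)\norm{f(x)-f(y)}+\lambda\norm{g(x)-g(y)}\leq\norm{x-y}$ is in fact an equality.

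The only step requiring a moment's thought is to pass from the equality $(1-\lambda)\norm{f(x)-f(y)}+\lambda\norm{g(x)-g(y)}=\norm{x-y}$ to the conclusion that each term separately equals $\norm{x-y}$; this is where the nontriviality $\lambda\in(0,1)$ enters. Writing $\norm{f(x)-f(y)}=\norm{x-y}-a$ and $\norm{g(x)-g(y)}=\norm{x-y}-b$, where $a,b\geq 0$ by nonexpansiveness, the equality becomes $(1-\lambda)a+\lambda b=0$; as both weights $1-\lambda$ and $\lambda$ are strictly positive, this forces $a=b=0$. Hence $\norm{f(x)-f(y)}=\norm{g(x)-g(y)}=\norm{x-y}$, and since $x,y\in C$ were arbitrary, both $f$ and $g$ are isometries.

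I anticipate no genuine obstacle here: the argument is a short sandwiching via the triangle inequality, and the single elementary observation it rests on—that a convex combination of two quantities each bounded above by $M$ can equal $M$ only when both quantities equal $M$—is exactly what the positivity of the weights guarantees.
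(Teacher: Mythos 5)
Your proof is correct and is essentially identical to the paper's own argument: the paper also sandwiches $\|x-y\|$ via the triangle inequality and nonexpansiveness, leaving implicit the final step (that strictly positive weights force both terms to equal $\|x-y\|$) which you spell out explicitly.
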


\begin{proof}
  The statement follows from
  \begin{align*}
    \|x-y\| &= \|(1-\lambda) f(x) +\lambda g(x) - (1-\lambda) f(y) - \lambda g(y)\|\\ & \leq (1-\lambda) \|f(x)-f(y)\| + \lambda \|g(x)-g(y)\|\\ &\leq \|x-y\|,
  \end{align*}
  for all $x,y\in C$.
\end{proof}

Recall that for a subset $C\subset X$ of a Banach space a point $x\in C$ is called \emph{exposed} if there is a functional $\varphi\in X^*$ with
\[
  \varphi(x) = \sup_{z\in C} \varphi(z) =:s \qquad \text{and}\qquad \varphi^{-1}(s)\cap C = \{x\},
\]
see e.g. Definition~7.10 in~\cite[p.~336]{CzechBible}.
For convex sets $C$ and $x\in C$ we might rephrase this to require that there is a hyperplane supporting $C$ in $x$ which intersects $C$ precisely in $x$. We use the following weaker notion.

\begin{definition}\label{def:almexp}
  Let $X$ be a Banach space and $C\subset X$ a closed and convex set. A point $x\in C$ is called \emph{almost exposed} if the intersection of all hyperplanes supporting $C$ in $x$ is the singleton $x$.
\end{definition}

\begin{remark}
  Since every exposed point is almost exposed it seems natural to ask whether the converse is also true, i.e. if every almost exposed point is already an exposed point. We do not know the answer to this question.
\end{remark}

\begin{proposition}
  Let $X$ be a Banach space and $C\subset X$ be a strictly convex closed set with nonempty interior. Then every element of the boundary of $C$ is exposed and hence almost exposed.
\end{proposition}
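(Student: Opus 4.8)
The plan is to produce, at an arbitrary boundary point $x\in\partial C$, a supporting functional whose supporting hyperplane meets $C$ only at $x$; strict convexity will be exactly what rules out a second point of contact.

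First I would invoke the supporting hyperplane theorem. Since $C$ is convex with nonempty interior and $x\in\partial C=C\setminus\operatorname{int}(C)$ (using that $C$ is closed), the geometric Hahn--Banach theorem separates the open convex set $\operatorname{int}(C)$ from the point $x$: there is a nonzero $\varphi\in X^{*}$ and a scalar $s\in\R$ with $\varphi(z)\le s\le\varphi(x)$ for all $z\in\operatorname{int}(C)$. Because a closed convex set with nonempty interior coincides with the closure of its interior, taking suprema gives $\sup_{z\in C}\varphi(z)=\sup_{z\in\operatorname{int}(C)}\varphi(z)\le s$, while $x\in C$ forces $\varphi(x)\le\sup_{z\in C}\varphi(z)$. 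Combining these yields $\varphi(x)=\sup_{z\in C}\varphi(z)=s$, so $\varphi$ supports $C$ at $x$.

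It then remains to check that $\varphi^{-1}(s)\cap C=\set{x}$. Suppose towards a contradiction that some $y\in C$ with $y\neq x$ also satisfies $\varphi(y)=s$, and set $m=\tfrac12(x+y)$, so that $\varphi(m)=s$ as well. Strict convexity of $C$ guarantees that the midpoint of two distinct points of $C$ lies in $\operatorname{int}(C)$, so there is $r>0$ with $B_{X}(m,r)\subseteq C$. Since $\varphi\neq 0$, we may pick a unit vector $u\in X$ with $\varphi(u)>0$; then $m+\tfrac{r}{2}u\in C$, yet $\varphi(m+\tfrac{r}{2}u)=s+\tfrac{r}{2}\varphi(u)>s$, contradicting $\varphi\le s$ on $C$. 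Hence no such $y$ exists, $x$ is exposed, and since every exposed point is almost exposed, also almost exposed.

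I expect the only genuine point requiring care to be the first step: verifying that a supporting hyperplane exists at every boundary point in a possibly infinite-dimensional space. This is where the nonempty-interior hypothesis is essential, as it supplies the open convex set needed to apply the separation form of Hahn--Banach, together with the standard fact that $\overline{\operatorname{int}(C)}=C$ for closed convex $C$ with nonempty interior, which is what allows the separating constant $s$ to double as the supremum of $\varphi$ over all of $C$. The contradiction step is then routine, relying only on the observation that a nonzero continuous functional cannot attain its maximum over $C$ at an interior point.
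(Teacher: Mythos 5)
Your proof is correct and follows essentially the same route as the paper: a Hahn--Banach separation of $\operatorname{int}(C)$ from the boundary point $x$ yields the supporting functional, and strict convexity rules out a second point of contact. The only (cosmetic) difference lies in the uniqueness step, where you invoke strict convexity in the form ``midpoints of distinct points of $C$ are interior'' together with the maximum principle, while the paper uses the equivalent formulation that $\partial C$ contains no line segment and observes that $H\cap C$, being convex, would otherwise contain one; for a closed convex set with nonempty interior these two characterisations coincide.
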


\begin{proof}
  Let $x\in \partial C$. By the Hahn-Banach theorem there is a functional $\varphi\in X^*$ with $1=\varphi(x) > \varphi(z)$ for all $z$ in the interior of $C$. Since both $C$ and the hyperplane $H:=\{x\in X\colon\varphi(x)=1\}$ are convex and the boundary of $C$ does not contain a line segment, we conclude that $H\cap C=\{x\}$.
\end{proof}

Recall that given a set $C\subset X$ a point $x\in C$ is called \emph{strongly exposed} if there is a functional $x^*\in X^*$ with $\langle x^*, x\rangle = \sup \{\langle x^*, z\rangle \colon z\in C\}$ and $x_n\to x $ for all sequences in~$C$ with $\lim_{n\to\infty} \langle x^*, x_n\rangle = \langle x^*, x\rangle$. Note that every strongly exposed point of $C$ is an exposed point of $C$ and hence in particular an almost exposed point of $C$.

\begin{proposition}
  Let $X$ be a Banach space with the Radon-Nikodym property. Then every bounded closed convex set $C\subset X$ is the closed convex hull of its almost exposed points.
\end{proposition}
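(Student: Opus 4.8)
The plan is to exploit the defining property of the Radon-Nikodym property (RNP): a Banach space $X$ has the RNP if and only if every bounded closed convex subset $C\subseteq X$ is the closed convex hull of its \emph{strongly exposed} points. This is a deep but standard characterisation, usually attributed to Phelps (building on work of Namioka, Asplund, Bourgain and others); see the discussion surrounding the RNP in \cite{CzechBible}. Since the proposition is stated for spaces with the RNP, I would simply invoke this characterisation rather than prove it from scratch.

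\medskip

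\noindent Concretely, the argument proceeds in two short steps. First, let $C\subseteq X$ be bounded, closed and convex, and let $S$ denote the set of strongly exposed points of $C$. By the Phelps characterisation of the RNP, $C=\cl{\conv}(S)$. Second, recall the remark made just before the statement: every strongly exposed point is exposed, and every exposed point is almost exposed (the exposing functional's supporting hyperplane already intersects $C$ in the single point $x$, so the intersection of \emph{all} supporting hyperplanes at $x$ is certainly the singleton $\{x\}$). Hence $S$ is contained in the set $A$ of almost exposed points of $C$. Combining these,
\[
  C=\cl{\conv}(S)\subseteq \cl{\conv}(A)\subseteq C,
\]
where the last inclusion holds because $A\subseteq C$ and $C$ is closed and convex. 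Therefore $C=\cl{\conv}(A)$, which is exactly the claim.

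\medskip

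\noindent The only genuine content is the citation: everything else is a one-line containment of point-classes ($\text{strongly exposed}\subseteq\text{exposed}\subseteq\text{almost exposed}$) followed by monotonicity of closed convex hull. I expect the main (and really the sole) obstacle to be pinning down the precise reference for the RNP--strongly-exposed-points equivalence and confirming it is stated in the form needed here; if the authors prefer a self-contained treatment they would instead cite the relevant theorem in a monograph such as \cite{CzechBible} or a standard text on the geometry of Banach spaces. No approximation, limiting, or quantitative estimate is required, so there is no delicate analytic step to manage.
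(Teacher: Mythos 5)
Your proposal is correct and follows essentially the same route as the paper: the authors likewise invoke Phelps' theorem that in a space with the Radon-Nikodym property every bounded closed convex set is the closed convex hull of its strongly exposed points (citing \cite{Phelps}), combined with the chain strongly exposed $\Rightarrow$ exposed $\Rightarrow$ almost exposed noted just before the proposition. The only difference is presentational: you spell out the containment and hull-monotonicity argument that the paper leaves implicit.
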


\begin{proof}
  This is a direct consequence of the fact that every bounded closed convex subset of
  a Banach space with the Radon-Nikodym property is the closed convex hull of its strongly exposed points, see \cite{Phelps}.
\end{proof}

In Lemma~\ref{lem:LinearDual}, and hence in Proposition~\ref{prop:LinearCase}, we are able to show that on sets which are the closed convex hull of its extreme points, the identity is extremal among \emph{linear} nonexpansive mappings. For a corresponding nonlinear result, we need the stronger condition that the underlying convex set is the closed convex hull of its almost exposed points.

\begin{theorem}[Theorem~\ref{thm:collect}, (\ref{almexp})]\label{thm:GeneralSurjIsom}
  Let $X$ be a Banach space with the property that its unit ball $\mathbb{B}_X$ is the closed convex hull of its almost exposed points. Then surjective isometries are extremal in the space of nonexpansive self-mappings of the unit ball.
\end{theorem}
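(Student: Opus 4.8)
The plan is to reduce to the identity and then exploit the geometry of almost exposed points through supporting functionals. By Lemma~\ref{lemma:id_reduction}, and since $\mathbb{B}_X$ has nonempty interior, it suffices to prove that $\id\colon\mathbb{B}_X\to\mathbb{B}_X$ is extremal in $\mc{M}$. So I would start from a hypothetical representation $\id=(1-\lambda)g+\lambda h$ with $\lambda\in(0,1)$ and $g,h\in\mc{M}$, and aim to show $g=h=\id$. Since $g(x)-x=\lambda(g(x)-h(x))$ for every $x$, the single equality $g=\id$ already forces $h=\id$, so the entire task is to rule out fixed-point-free behaviour of $g$.

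Two facts come essentially for free. First, because $\id$ is an isometry and $\id=(1-\lambda)g+\lambda h$, the proposition above (asserting that the summands of a nontrivial convex combination equal to an isometry are themselves isometries) shows that both $g$ and $h$ are isometries of $\mathbb{B}_X$. Second, $g$ and $h$ fix every almost exposed point: if $a$ is almost exposed and $\varphi\in X^*$ is any supporting functional of $\mathbb{B}_X$ at $a$ (so $\|\varphi\|=\varphi(a)=1$), then evaluating the convex combination gives $1=\varphi(a)=(1-\lambda)\varphi(g(a))+\lambda\varphi(h(a))$ with $\varphi(g(a)),\varphi(h(a))\le 1$; hence $\varphi(g(a))=\varphi(h(a))=1$. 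Thus $g(a)$ and $h(a)$ lie on every supporting hyperplane of $\mathbb{B}_X$ at $a$, and since $a$ is almost exposed the intersection of these hyperplanes is $\{a\}$, forcing $g(a)=h(a)=a$. Together with the isometry property this yields the key intermediate conclusion that $\|g(x)-a\|=\|x-a\|$ for every $x\in\mathbb{B}_X$ and every almost exposed point $a$.

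The remaining and genuinely hard step is to propagate fixedness from the almost exposed points to all of $\mathbb{B}_X$, using only that $\mathbb{B}_X=\overline{\conv}(A)$, where $A$ is the set of almost exposed points. As $g$ is continuous, it suffices to establish $g(y)=y$ for $y\in\conv(A)$. Writing $v:=g(x)-x$ for a candidate point $x$, the relations $g(x)=x+v$ and $h(x)=x-\tfrac{1-\lambda}{\lambda}v$, combined with $\|g(x)-a\|=\|h(x)-a\|=\|x-a\|$, show on testing against any supporting functional $\varphi$ of $x-a$ that $\varphi(v)\le 0$ from the $g$-relation and $\varphi(v)\ge 0$ from the $h$-relation, hence $\varphi(v)=0$; geometrically, $v$ is tangent at $x$ to every sphere centred at a point of $A$ and passing through $x$. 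I expect this to be the main obstacle. In a strictly convex space the tangency instantly forces $v=0$, but in the general almost-exposed setting one must upgrade this pointwise tangency information, read off only at the spanning set $A$, to the global conclusion $v=0$; it is precisely here that almost exposedness must do work that mere extremality (as used in Lemma~\ref{lem:LinearDual}) cannot.

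To carry this out I would use the full strength of almost exposedness, as the surrounding text foreshadows. Concretely, I would work with the exposed faces $F_\varphi=\{z\in\mathbb{B}_X:\varphi(z)=1\}$ of norm-attaining $\varphi\in S_{X^*}$, note as above that the convex-combination identity forces $g(F_\varphi)\subseteq F_\varphi$, and then combine this face-invariance with the isometry property and the equidistance $\|g(y)-a\|=\|y-a\|$ to confine $g(y)$ to an intersection of faces determined, through a convex representation $y=\sum_i\mu_i a_i$, by the supporting functionals available at the almost exposed points $a_i$. The delicate point is to ensure that these supporting functionals separate $g(y)$ from every competing point, so that the relevant intersection collapses to $\{y\}$; once this separation is secured, the equality $g(y)=y$ holds on $\conv(A)$, and passing to the closure $\overline{\conv}(A)=\mathbb{B}_X$ completes the argument.
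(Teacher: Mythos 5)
Your preliminary steps are correct, and some coincide with the paper's: the reduction to showing $\id$ is extremal via Lemma~\ref{lemma:id_reduction} is exactly how the paper starts; the observation that $g$ and $h$ must be isometries is valid; and your argument that $g(a)=h(a)=a$ for every almost exposed point $a$ (supporting functionals are maximised at $a$, the convex identity forces equality in each term, and almost exposedness collapses the intersection of the supporting hyperplanes to $\set{a}$) is sound, as is the tangency relation $\varphi(g(x)-x)=0$ for functionals norming $x-a$. However, the proof stops exactly where the theorem becomes nontrivial: you never establish $g(y)=y$ for $y\in\conv(A)$, and you flag this yourself as ``the main obstacle''. This is a genuine gap, not a routine verification, and the repair you sketch is unlikely to close it. Face-invariance is useless here because exposed faces $F_\varphi$ lie in the unit sphere, so they carry no information about $g$ at interior points of the ball, and $\conv(A)$ (being dense in $\mathbb{B}_X$) is full of interior points. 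As for the tangency conditions: the hypothesis of the theorem gives separation properties only for the supporting functionals \emph{at the points of $A$}, not for the functionals norming the displacement vectors $x-a$. Indeed, your argument does succeed at special points --- at $x=0$, or more generally $x=ra$, the functionals norming $x-a=-(1-r)a$ are exactly the negatives of the supporting functionals at $a$, so almost exposedness of $a$ yields $v=0$ --- but at a general $x$ the set of functionals norming $x-a$ can be very small (a single functional, in a smooth space) and may have a large common kernel even after taking the union over $a\in A$; nothing in the hypothesis rules this out.

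The paper closes precisely this gap by a different mechanism, which makes almost exposedness act at \emph{every} point of the ball rather than only at the points of $A$: it applies the convex identity to difference quotients anchored at an arbitrary $x$. For $x\in\mathbb{B}_X$, $e$ almost exposed, $t\in(0,1-\norm{x})$ and any supporting functional $\varphi$ at $e$,
\begin{equation*}
  \varphi(e)=(1-\lambda)\,\varphi\br{\frac{g(x+te)-g(x)}{t}}+\lambda\,\varphi\br{\frac{h(x+te)-h(x)}{t}}.
\end{equation*}
Nonexpansiveness places both quotients in $\mathbb{B}_X$, so both values are at most $\varphi(e)$, hence both equal $\varphi(e)$; since this holds for every supporting functional at $e$, almost exposedness forces the quotient itself to equal $e$, i.e.\ $g(x+te)=g(x)+te$. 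This converts the one-sided inequality information you were stuck with into exact vector equations. Induction then shows that $g$ is the translation by $g(0)$ on $\conv(E)$, continuity extends this to the closed convex hull $\mathbb{B}_X$, and the only translation mapping $\mathbb{B}_X$ into itself is the identity. Note that this argument needs neither your isometry observation nor the fixed-point observation; if you want to complete your own approach, the difference-quotient trick is the idea you would have to rediscover.
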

\begin{proof}
  It suffices, in view of Lemma~\ref{lemma:id_reduction}, to prove that the identity, $\id\in \mc{M}$, is extremal. Suppose that there exist $\lambda\in (0,1)$ and $g,h\in \mc{M}$ such that $\id=(1-\lambda)g+\lambda h$. We complete the proof by showing that $g=\id$.

  We denote by $E$ the set of almost exposed points of $\mathbb{B}_X$. For $e\in E$, $x\in \mathbb{B}_{X}$, $t\in (0,1-\norm{x})$ and every functional $\varphi\in X^*$ with $\varphi(e)=\sup\{\varphi(z)\colon z\in B_X\}$ we have
  \begin{multline*}
    \varphi(e) = \varphi\br{\frac{\id(x+te)-\id(x)}{t}}\\ =(1-\lambda) \varphi\br{\frac{g(x+te)-g(x)}{t}} + \lambda\varphi\br{\frac{h(x+te)-h(x)}{t}}\leq \varphi(e)
  \end{multline*}
  We conclude that
  \begin{equation*}
    \frac{g(x+te)-g(x)}{t}\in \mathbb{B}_{X} \cap \set{y\in X\colon \varphi(y) =\varphi(e)}
  \end{equation*}
  Since the above is true for every functional $\varphi$ defining a hyperplane which supports $\mathbb{B}_X$ in $e$ and $e$ is almost exposed, we have shown that $\frac{g(x+te)-g(x)}{t}=e$. We have established this for an arbitrary $e\in E$, $x\in \mathbb{B}_{X}$ and $t\in (0,1-\norm{x})$. Therefore, in summary, we have proved that
  \begin{equation*}
    g(x+te)=g(x)+te\qquad\text{for every $e\in E$, $x\in \mathbb{B}_{X}$ and $t\in (0,1-\norm{x})$.}
  \end{equation*}
  Therefore, by induction, we get that
  \begin{multline*}
    g\br{\sum_{j=1}^{n}\alpha_{j}e_{j}}=g(0)+\sum_{j=1}^{n}\alpha_{j}e_{j} \qquad \text{for every $n\in \mathbb{N}$, $e_{1},\ldots,e_{n}\in E$}\\\text{and $\alpha_{1},\ldots,\alpha_{n} \geq 0$ such that $\sum_{j=1}^{n}\alpha_{j}=1$.}
  \end{multline*}
  This shows that on the convex hull of $E$, $g$ coincides with the translation by $g(0)$. By continuity of $g$ this is also true on the closed convex hull of $E$ which is $\mathbb{B}_{X}$. Since the only translation which maps the unit ball to itself is the identity, we conclude that $g=\id$.
\end{proof}

Since the unit ball of a strictly convex Banach space is an expand-contract plastic space, see~\cite{CKZ}, i.e. on the unit ball of a strictly convex space every bijective nonexpansive mapping is already an isometry, we obtain the following corollary.

\begin{corollary}
  Let $X$ be a strictly convex Banach space with the Radon-Nikodym property. The bijective nonexpansive mappings are extremal in the space of nonexpansive self-mappings of the unit ball~$\mathbb{B}_X$.
\end{corollary}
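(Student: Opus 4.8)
The plan is to combine two facts that are already in place: the plasticity of the unit ball of a strictly convex space, cited from~\cite{CKZ}, and the extremality result for surjective isometries, Theorem~\ref{thm:GeneralSurjIsom}. The strategy is to observe that, in this particular setting, the hypothesis ``bijective nonexpansive'' is equivalent to ``surjective isometry'', after which the desired conclusion is immediate.

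First I would fix an arbitrary bijective nonexpansive self-mapping $f\colon\mathbb{B}_X\to\mathbb{B}_X$. Since $X$ is strictly convex, the unit ball $\mathbb{B}_X$ is expand-contract plastic by the cited result of~\cite{CKZ}; that is, every nonexpansive bijection of $\mathbb{B}_X$ onto itself is automatically an isometry. Hence $f$ is an isometry, and being onto it is a surjective isometry. Next I would check that Theorem~\ref{thm:GeneralSurjIsom} is applicable: as $X$ has the Radon-Nikodym property, the preceding proposition shows that $\mathbb{B}_X$ is the closed convex hull of its almost exposed points, which is exactly the hypothesis of that theorem. Theorem~\ref{thm:GeneralSurjIsom} then yields that every surjective isometry of $\mathbb{B}_X$ is extremal in $\mc{M}$, and applying this to $f$ finishes the argument.

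Since the statement is a corollary, I do not expect a genuine obstacle: all the mathematical content resides in the two inputs. The only points meriting care are that strict convexity is genuinely the feature that upgrades a bijective nonexpansive map to an isometry via plasticity, and that the notion of extremality supplied by Theorem~\ref{thm:GeneralSurjIsom} is precisely extremality in $\mc{M}$ in the sense used in the corollary, so that no further reduction (such as the one carried out in Lemma~\ref{lemma:id_reduction}) is required once the map is known to be a surjective isometry.
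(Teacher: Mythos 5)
Your proposal is correct and follows exactly the paper's argument: the result of~\cite{CKZ} upgrades every bijective nonexpansive self-mapping of $\mathbb{B}_X$ to a surjective isometry via plasticity of the unit ball of a strictly convex space, and the Radon-Nikodym property supplies the almost-exposed-points hypothesis so that Theorem~\ref{thm:GeneralSurjIsom} applies directly. No gaps; your closing remark that no further reduction via Lemma~\ref{lemma:id_reduction} is needed is also accurate, since that reduction is already internal to Theorem~\ref{thm:GeneralSurjIsom}.
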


\begin{remark}
  Since the unit ball of~$\ell_1$ also is an expand-contract plastic space, see~\cite{KZ}, and all $\ell_p$-spaces, $1\leq p<\infty$ have the Radon-Nikodym property, we obtain that on the unit ball $\mathbb{B}_{\ell_p}$, $1\leq p <\infty$, bijective nonexpansive mappings are extremal.
\end{remark}

\section{Extremality of surjective isometries on the unit ball of the space $C(K)$ or $c_0$}\label{sec:CK_c0}
This section is in general dedicated to showing the extremality of surjective isometries on the unit ball of the function space $C(K)$ for compact Hausdorff $K$ and the unit ball of the sequence space $c_0,$ i.e. proving Theorem~\ref{thm:collect}, (\ref{CK}) and (\ref{czero}), first of which we restate as Theorem \ref{thm: C(K)-result} in this section. We start by obtaining the result for surjective isometries $\mathbb{B}_{C(K)}\to \mathbb{B}_{C(K)}$ and then present the related results in the setting of $\mathbb{B}_{c_0}.$ At the end of Section \ref{sec:CK_c0} we present an equivalent condition for extremality of linear mappings on $\mathbb{B}_{c_0}$.

Our first aim is to show that surjective isometries on the unit ball of $C(K)$ are extremal (see Theorem \ref{thm: C(K)-result}). Thanks to Lemma~\ref{lemma:id_reduction}, it is sufficient to check this only for the identity mapping. The proof of Theorem \ref{thm: C(K)-result} relies on two important observations that will be stated as propositions (see Proposition \ref{prop: F,G as id+-1} and Proposition \ref{prop: G is sur isom all K}). The first observation shows that if the identity mapping has a representation as a nontrivial convex combination of two nonexpansive mappings, then these mappings mimic the identity mapping on some specific arguments.

\begin{proposition}\label{prop: F,G as id+-1}
Let $\mc{M}$ denote the set of nonexpansive mappings $\mathbb{B}_{C(K)}\to \mathbb{B}_{C(K)}$. If there exist $F,G\in \mathcal{M}$ and $\lambda\in (0,1)$ such that $\id=(1-\lambda) F+ \lambda G,$ then both $F$ and $G$ have to be such that for every $f\in S_{C(K)}$ and $x\in K$ with $f(x)=\pm 1$ we have
\begin{equation}\label{eq: F(f)(x)=G(f)(x)=id}
F(f)(x)=G(f)(x)=\id(f)(x)=\pm 1.
\end{equation}
        
\end{proposition}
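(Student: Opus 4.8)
The plan is to argue entirely pointwise, evaluating the given convex representation of the identity at the point $x$ and then exploiting the fact that $\pm 1$ are the extreme points of the interval $[-1,1]$. The only structural facts I need are that $F$ and $G$ map into $\mathbb{B}_{C(K)}$ and that the convex weights are strictly positive. Note first that since $f\in S_{C(K)}$ we have $\supnorm{f}=1$, so $f\in\mathbb{B}_{C(K)}$ lies in the common domain of $F$ and $G$; consequently $F(f),G(f)\in\mathbb{B}_{C(K)}$ and in particular $\abs{F(f)(x)}\le 1$ and $\abs{G(f)(x)}\le 1$ for every $x\in K$.

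I would then fix $f\in S_{C(K)}$ and $x\in K$ with $f(x)=\pm 1$. Evaluating the identity $\id=(1-\lambda)F+\lambda G$ at $f$, and then at the point $x$, gives
\[
  f(x)=\id(f)(x)=(1-\lambda)\,F(f)(x)+\lambda\,G(f)(x),
\]
where, as just observed, both $F(f)(x)$ and $G(f)(x)$ lie in $[-1,1]$.

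The key step is the extremality of $\pm 1$ in $[-1,1]$. In the case $f(x)=1$, the right-hand side is a convex combination, with strictly positive weights $1-\lambda$ and $\lambda$, of two numbers in $[-1,1]$ that equals the endpoint $1$; this forces $F(f)(x)=G(f)(x)=1$. The case $f(x)=-1$ is handled by the identical computation with the sign reversed, forcing $F(f)(x)=G(f)(x)=-1$. In either case we obtain $F(f)(x)=G(f)(x)=f(x)=\id(f)(x)=\pm 1$, which is exactly \eqref{eq: F(f)(x)=G(f)(x)=id}.

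I do not expect a genuine obstacle here: the statement is a direct consequence of membership of $F$ and $G$ in $\mc{M}$ together with the strict positivity of the weights, and no continuity or the full strength of nonexpansiveness is needed beyond the pointwise bounds $\abs{F(f)(x)},\abs{G(f)(x)}\le 1$. The only point requiring a moment's care is to record that $f$ belongs to the domain $\mathbb{B}_{C(K)}$ (immediate from $\supnorm{f}=1$), and to observe that the two sign cases must be treated separately rather than reduced to one another via $-f$, since $F$ and $G$ are merely nonexpansive and need not be odd.
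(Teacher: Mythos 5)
Your proof is correct and is essentially the same as the paper's: both arguments evaluate the convex representation pointwise at $x$ and use that a strictly positive convex combination of numbers in $[-1,1]$ can equal $\pm 1$ only if both numbers equal $\pm 1$ (the paper just spells out this extremality fact as an explicit contradiction, e.g.\ $1=(1-\lambda)F(f)(x)+\lambda G(f)(x)<(1-\lambda)+\lambda=1$ when $F(f)(x)<1$). No discrepancies.
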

\begin{proof}
We show the equality (\ref{eq: F(f)(x)=G(f)(x)=id}) for $F$, the proof for $G$ is essentially the same. Fix $f\in S_{C(K)}$ and $x\in K$ such that $|f(x)|=1.$ Assume at first that $f(x)=1.$ Then $\id(f)(x)= 1.$ Note that if $F(f)(x)\neq 1,$ then we have that $F(f)(x)<1$ and therefore
    $$1=(1-\lambda) F(f)(x) + \lambda G(f)(x)< (1-\lambda)+\lambda=1,$$
    which is a contradiction.
Now assume that $f(x)=-1.$ If $F(f)(x)\neq \id(f)(x)=-1$ then $F(f)(x)>-1$ and therefore
    $$-1=(1-\lambda) F(f)(x) + \lambda G(f)(x)>-(1-\lambda)- \lambda=-1,$$
which is a contradiction. Hence, (\ref{eq: F(f)(x)=G(f)(x)=id}) holds for every $f\in S_{C(K)}$ and $x\in K$ with $f(x)=\pm 1.$
\end{proof}
In the following we intend to provide a converse to Propostion~\ref{prop: F,G as id+-1}. We namely show, in Proposition~\ref{prop: G is sur isom all K}, that once a nonexpansive mapping $\mathbb{B}_{C(K)}\to \mathbb{B}_{C(K)}$ satisfies (\ref{eq: F(f)(x)=G(f)(x)=id}) for every $f\in S_{C(K)}$ and $x\in K$ with $f(x)=\pm 1$ (as in Proposition \ref{prop: F,G as id+-1}), it has to be equal to the identity mapping everywhere. The proof of this key-observation makes use of the following lemma that allows us to choose for any element in $\mathbb{B}_{C(K)}$ a suitable norm-1 element that agrees with the initial function over most of the domain but in a specific place guarantees we can apply Proposition \ref{prop: F,G as id+-1}.

\begin{lemma}\label{lem: C(K) g+g-}
        Let $K$ be a compact Hausdorff topological space. For every $f\in \mathbb{B}_{C(K)},$ every $x_0\in K$ with $|f(x_0)|\neq 1,$ and every $0<\gamma<\min\{|-1-f(x_0)|,|1-f(x_0)|\}$ there is $U_{x_0}$ a neighbourhood of $x_0$ for which we can define $g_+,g_-\in \mathbb{B}_{C(K)}$ such that
    \begin{enumerate}[(i)]
        \item\label{i} $g_+(x_0)=1$ and $g_-(x_0)=-1,$
        \item\label{ii} $g_+(x)=f(x)=g_-(x)$ for every $x\in K\backslash U_{x_0},$
        \item\label{iii} $\|f-g_+\|\leq 1-f(x_0)+\gamma,$
        \item\label{iv} $\|f-g_-\|\leq 1+f(x_0)+\gamma.$
    \end{enumerate}
\end{lemma}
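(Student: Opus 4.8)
The plan is to build both $g_+$ and $g_-$ from a single Urysohn bump function at $x_0$: I will "pull" the value of $f$ at $x_0$ up to $1$ (respectively down to $-1$) while leaving $f$ untouched away from $x_0$, and I will keep everything inside the unit ball by arranging each $g_\pm$ to be, pointwise, a convex combination of $f(x)$ and a constant $\pm 1$. The only real tool needed is normality of a compact Hausdorff space.

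First I would write $a:=f(x_0)$; since $\|f\|_\infty\le 1$ and $|a|\neq 1$ we have $-1<a<1$, so that $1-a=|1-f(x_0)|$ and $1+a=|{-1}-f(x_0)|$ are both positive and $\gamma$ lies strictly between $0$ and their minimum. Using continuity of $f$ at $x_0$, I would set
\[
U_{x_0}:=\set{x\in K\colon |f(x)-a|<\gamma},
\]
an open neighbourhood of $x_0$; note that the upper constraint on $\gamma$ forces $-1<f<1$ throughout $U_{x_0}$, a feature that is convenient in the intended application even though it is not needed for (i)--(iv). Since $K$ is compact Hausdorff, hence normal and $T_1$, the disjoint closed sets $\set{x_0}$ and $K\setminus U_{x_0}$ can be separated by Urysohn's lemma, yielding a continuous $\phi\colon K\to[0,1]$ with $\phi(x_0)=1$ and $\phi\equiv 0$ on $K\setminus U_{x_0}$.

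Next I would define
\[
g_+:=\phi+(1-\phi)f,\qquad g_-:=-\phi+(1-\phi)f,
\]
both continuous. Then (i) and (ii) are immediate from $\phi(x_0)=1$ and $\phi|_{K\setminus U_{x_0}}=0$. Membership in $\mathbb{B}_{C(K)}$ is automatic, since for each $x$ the value $g_+(x)$ is a convex combination of $1$ and $f(x)$, and $g_-(x)$ one of $-1$ and $f(x)$, all lying in $[-1,1]$. For the norm estimates I would compute $f-g_+=\phi(f-1)$ and $f-g_-=\phi(f+1)$, so that pointwise $|f-g_+|=\phi(1-f)$ and $|f-g_-|=\phi(1+f)$ (using $-1\le f\le 1$). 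Both vanish off $U_{x_0}$, while on $U_{x_0}$ the estimate $f>a-\gamma$ gives $1-f<1-a+\gamma$ and $f<a+\gamma$ gives $1+f<1+a+\gamma$; combining with $0\le\phi\le 1$ yields (iii) and (iv).

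I do not anticipate a serious obstacle here. The only points requiring care are recognising that the same neighbourhood and the same bump function can serve both $g_+$ and $g_-$, and that the convex-combination form of $g_\pm$ is exactly what simultaneously keeps them in the unit ball and turns the symmetric $\gamma$-control of $f$ on $U_{x_0}$ into the two one-sided distance bounds. The essential ingredient is Urysohn's lemma, available because the compact Hausdorff space $K$ is normal.
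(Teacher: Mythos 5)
Your proof is correct and is essentially the same as the paper's: you choose the same neighbourhood $U_{x_0}=f^{-1}\left(\left(f(x_0)-\gamma,f(x_0)+\gamma\right)\right)$, the same Urysohn bump function, and your $g_\pm=\pm\phi+(1-\phi)f$ are literally the paper's $g_\pm = f\pm r(1\mp f)$ rewritten in convex-combination form, with the same norm estimates.
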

The proof of Lemma \ref{lem: C(K) g+g-} makes use of the well-known Urysohn lemma (see e.g.  Theorem 3.1 in \cite{munkres2000topology}), stated here (without proof) for the convenience of the reader.

\begin{lemma}[Urysohn lemma]\label{lemma:Urysohn}
    Let $X$ be a normal space, let $A$ and $B$ be disjoint closed subsets of $X.$ Let $[a,b]$ be a closed interval in the real line. Then there exists a continuous map
    \[r\colon X\to [a,b]\]
    such that $r(x)=a$ for every $x\in A$ and $r(x)=b$ for every $x\in B.$
\end{lemma}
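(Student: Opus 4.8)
The plan is to give the classical proof of Urysohn's lemma, constructing a continuous separating function as a limit built from a nested family of open sets indexed by the dyadic rationals. First I would reduce to the case $[a,b]=[0,1]$: composing any continuous $r_{0}\colon X\to[0,1]$ with the affine homeomorphism $t\mapsto a+(b-a)t$ yields a continuous map into $[a,b]$ with the required boundary values, so it suffices to produce a continuous function taking the value $0$ on $A$ and $1$ on $B$. The key tool is the standard reformulation of normality: whenever $C$ is closed, $V$ is open and $C\subseteq V$, there is an open set $W$ with $C\subseteq W\subseteq\overline{W}\subseteq V$; this follows by applying the defining property of a normal space to the disjoint closed sets $C$ and $X\setminus V$.

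Let $D$ denote the set of dyadic rationals in $[0,1]$. The heart of the argument is the construction, by induction on the denominator, of open sets $U_{q}$ for $q\in D$ satisfying $\overline{U_{p}}\subseteq U_{q}$ whenever $p<q$. I would begin by setting $U_{1}:=X\setminus B$, which is open since $B$ is closed, and then use the reformulation of normality to choose $U_{0}$ with $A\subseteq U_{0}\subseteq\overline{U_{0}}\subseteq U_{1}$. Given the sets $U_{k/2^{n}}$ for all $0\le k\le 2^{n}$, each dyadic $q=(2j+1)/2^{n+1}$ of denominator $2^{n+1}$ lies strictly between its already-constructed neighbours $p=j/2^{n}$ and $s=(j+1)/2^{n}$, for which $\overline{U_{p}}\subseteq U_{s}$ holds; applying the reformulation of normality to the closed set $\overline{U_{p}}$ inside the open set $U_{s}$ produces an open $U_{q}$ with $\overline{U_{p}}\subseteq U_{q}\subseteq\overline{U_{q}}\subseteq U_{s}$, preserving the nesting property.

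With the family in hand, I would define
\[
  r(x):=\inf\{q\in D\colon x\in U_{q}\},
\]
with the convention $\inf\emptyset=1$. Since $A\subseteq U_{0}\subseteq U_{q}$ for every $q\in D$, we get $r\equiv 0$ on $A$; since each $x\in B$ lies in no $U_{q}$ (as every $U_{q}\subseteq U_{1}=X\setminus B$), we get $r\equiv 1$ on $B$. It remains to verify continuity, and this is the step I expect to be the main obstacle. Using that the intervals $[0,c)$ and $(c,1]$ form a subbasis of the topology on $[0,1]$, it suffices to show their preimages are open. For the first, $r^{-1}([0,c))=\bigcup_{q<c}U_{q}$, which is open. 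For the second, the closure-nesting gives $r^{-1}((c,1])=\bigcup_{q>c}(X\setminus\overline{U_{q}})$: if $r(x)>c$ one picks dyadics $c<p<q<r(x)$ and combines $\overline{U_{p}}\subseteq U_{q}$ with the fact that $x\notin U_{q}$ (forced by the defining infimum) to conclude $x\notin\overline{U_{p}}$; conversely, $x\notin\overline{U_{p}}$ for some $p>c$ forces $x\notin U_{q}$ for every $q\le p$, whence $r(x)\ge p>c$. Both unions are open, so $r$ is continuous, and rescaling completes the proof. The delicate point throughout is the bookkeeping of the strict inequalities in the infimum against the closure-nesting of the $U_{q}$, which is precisely what makes these two preimage identities come out correctly.
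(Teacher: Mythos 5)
Your proposal is correct: it is the classical dyadic-rational construction of Urysohn's lemma, which is exactly the argument in the reference the paper cites (Munkres, Theorem~3.1) --- the paper itself deliberately states this lemma \emph{without} proof. All the delicate points are handled properly: the reformulation of normality (closed $C\subseteq$ open $V$ yields open $W$ with $C\subseteq W\subseteq\overline{W}\subseteq V$), the inductive nesting $\overline{U_{p}}\subseteq U_{q}$ for $p<q$, the boundary values on $A$ and $B$, and the two subbasic preimage identities $r^{-1}([0,c))=\bigcup_{q<c}U_{q}$ and $r^{-1}((c,1])=\bigcup_{p>c}\bigl(X\setminus\overline{U_{p}}\bigr)$, including the correct interplay between the infimum and the closure-nesting.
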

\begin{proof}[Proof of Lemma \ref{lem: C(K) g+g-}]
    Fix $f\in \mathbb{B}_{C(K)},$ $x_0\in K.$ Fix $0<\gamma<\min\{|-1-f(x_0)|,|1-f(x_0)|\}.$ Since $f$ is continuous, we get an open $x_0$ neighbourhood $U_{x_0}:=f^{-1}((f(x_0)-\gamma,f(x_0)+\gamma)).$ Define continuous functions $g_+\colon K\to [-1,1]$ and $g_-\colon K\to [-1,1]$ in the following way:
    \begin{itemize}
        \item [$(+)$]$g_+(x)=f(x)+r(x)(1-f(x))$,
        \item [$(-)$]$g_-(x)=f(x)-r(x)(1+f(x))$,
    \end{itemize}
    where $r\colon K\to [0,1]$ is the continuous map obtained by the Urysohn lemma (Lemma~\ref{lemma:Urysohn}) with $A=K\backslash U_{x_0},$ $B=\{x_0\},$ $a=0,$ $b=1,$ i.e.:
    \begin{equation}\label{eq: r(x)}
        r(x)=\begin{cases}
            0,\quad\text{if}\quad x\in K\backslash U_{x_0},\\
            1,\quad\text{if}\quad x=x_0.
        \end{cases}
    \end{equation}
    
    It is easy to check that the values of both $g_+$ and $g_-$ take values in the interval $[-1,1]$ and are continuous, so $g_+,g_-\in \mathbb{B}_{C(K)}.$ Moreover, $g_+(x)\in [f(x_0)-\gamma,1]$ and $g_-(x)\in [-1,f(x_0)+\gamma]$ for all $x\in U_{x_{0}}$.
    
    Now we will show that conditions \eqref{i}--\eqref{iv} are satisfied.
    Condition \eqref{i} is true, as
    $g_+(x_0)=f(x_0)+r(x_0)(1-f(x_0))=f(x_0)+1-f(x_0)=1,$ and $g_-(x_0)=f(x_0)-r(x_0)(1+f(x_0))=f(x_0)-1-f(x_0)=-1.$

    We get \eqref{ii} easily, since $r(x)=0$ outside of $U_{x_0}.$

    We will now show that \eqref{iii} holds.
    \begin{align*}
        \|g_+-f\|&=\sup_{x\in K}|g_+(x)-f(x)|\\&
        =\max\big\{\sup_{x\in U_{x_0}}|g_+(x)-f(x)|, \sup_{x\notin U_{x_0}}|g_+(x)-f(x)|\big\}\\&
        =\sup_{x\in U_{x_0}}|g_+(x)-f(x)|\\&
        \leq 1-(f(x_0)-\gamma)=1-f(x_0)+\gamma.
    \end{align*}

    Similarly we get that
    \begin{align*}
        \|g_--f\|&=\sup_{x\in K}|f(x)-g_-(x)|\\&
        =\max\big\{\sup_{x\in U_{x_0}}|f(x)-g_-(x)|, \sup_{x\notin U_{x_0}}|f(x)-g_-(x)|\big\}\\&
        =\sup_{x\in U_{x_0}}|f(x)-g_-(x)|\\&
        \leq (f(x_0)+\gamma)+1=1+f(x_0)+\gamma,
    \end{align*}
    hence, \eqref{iv} holds.
\end{proof}
\begin{proposition}\label{prop: G is sur isom all K}
    If $G\colon \mathbb{B}_{C(K)}\to \mathbb{B}_{C(K)}$ is such a nonexpansive mapping that for every $f\in S_{C(K)}$ and $x\in K$ with $|f(x)|=1$ we have
    $$G(f)(x)= f(x)= \pm 1,$$
    then $G$ is the identity mapping on $\mathbb{B}_{C(K)}.$
\end{proposition}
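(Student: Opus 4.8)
The plan is to prove the pointwise identity $G(f)(x_0)=f(x_0)$ for every $f\in\mathbb{B}_{C(K)}$ and every $x_0\in K$; once this holds for all such pairs, $G$ is the identity mapping. I would fix an arbitrary $f\in\mathbb{B}_{C(K)}$ and $x_0\in K$ and split into two cases according to whether $|f(x_0)|=1$ or not.

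If $|f(x_0)|=1$, then $\norm{f}=1$, so $f\in S_{C(K)}$, and the hypothesis of the proposition applies directly to the pair $(f,x_0)$, giving $G(f)(x_0)=f(x_0)$. The substance of the argument lies in the remaining case $|f(x_0)|\neq 1$, where the idea is to compare $f$ with the two auxiliary norm-one functions supplied by Lemma~\ref{lem: C(K) g+g-}. For a small parameter $\gamma>0$ I would invoke that lemma to obtain $g_+,g_-\in\mathbb{B}_{C(K)}$ that agree with $f$ off a neighbourhood of $x_0$ and satisfy $g_+(x_0)=1$, $g_-(x_0)=-1$, $\norm{f-g_+}\leq 1-f(x_0)+\gamma$ and $\norm{f-g_-}\leq 1+f(x_0)+\gamma$. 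Since $g_+(x_0)=1$ and $g_-(x_0)=-1$ force $\norm{g_+}=\norm{g_-}=1$, both $g_+$ and $g_-$ lie in $S_{C(K)}$, so the proposition's hypothesis applies to them at $x_0$ and yields $G(g_+)(x_0)=1$ and $G(g_-)(x_0)=-1$.

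The final step is a two-sided squeeze based on nonexpansiveness of $G$. From $|G(f)(x_0)-G(g_+)(x_0)|\leq\norm{G(f)-G(g_+)}\leq\norm{f-g_+}$ together with $G(g_+)(x_0)=1$ I obtain $G(f)(x_0)\geq 1-(1-f(x_0)+\gamma)=f(x_0)-\gamma$; symmetrically, using $g_-$ and $G(g_-)(x_0)=-1$, I obtain $G(f)(x_0)\leq -1+(1+f(x_0)+\gamma)=f(x_0)+\gamma$. Hence $f(x_0)-\gamma\leq G(f)(x_0)\leq f(x_0)+\gamma$, and letting $\gamma\to 0^+$ gives $G(f)(x_0)=f(x_0)$, completing this case. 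I do not expect a serious obstacle here: the whole argument reduces to this clean estimate, and the only point that needs care is the observation that the comparison functions $g_\pm$ are themselves of norm one, which is precisely what makes the hypothesis applicable to them and thereby transports the known values at $x_0$ back to $G(f)(x_0)$.
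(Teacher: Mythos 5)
Your proof is correct and follows essentially the same route as the paper: both arguments rest on Lemma~\ref{lem: C(K) g+g-}, apply the hypothesis to the norm-one functions $g_+,g_-$ at $x_0$ (where $|g_\pm(x_0)|=1$), and then use nonexpansiveness of $G$. The only difference is organisational: the paper fixes $\gamma$ smaller than an assumed discrepancy $|G(f_0)(x_0)-f_0(x_0)|$ and derives a contradiction using whichever of $g_\pm$ matches the sign of that discrepancy, whereas you run the same estimates directly with both functions to squeeze $f(x_0)-\gamma\leq G(f)(x_0)\leq f(x_0)+\gamma$ and let $\gamma\to 0^+$.
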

\begin{proof}
    Let $G$ be as in the statement of this proposition. Let us assume to the contrary that $G$ is not the identity mapping, i.e. $G\neq \id$.

    In other words, we obtain $f_0\in \mathbb{B}_{C(K)}$ such that for some $x_0\in K$
    \begin{equation}\label{eq: f_0 not id}
    f_0(x_0)=\id(f_0)(x_0)\neq G(f_0)(x_0).
    \end{equation}

    Now we proceed to show that under such assumptions $G$ cannot be a nonexpansive mapping, which is obviously a contradiction.

    Assume firstly, that $G(f_0)(x_0)>f_0(x_0).$ Fix $0<\gamma<\min\{G(f_0)(x_0)-f_0(x_0),|1-f_0(x_0)|,|-1-f_0(x_0)|\}.$ Note that this minimum is positive. If it were not the case, we would have $f_0(x_0)=\pm 1,$ which would contradict the hypothesis of the proposition together with (\ref{eq: f_0 not id}). 
    Now choose $g_-$ from Lemma \ref{lem: C(K) g+g-} for $x_0$ and $f_0$. Then, since
    \begin{align*}
        \|G(f_0)-G(g_-)\|&\geq |G(f_0)(x_0)-G(g_-)(x_0)|\\&
        =|G(f_0)(x_0)+1|\\&
        =1+G(f_0)(x_0),
    \end{align*}
    we get
    $$\|f_0-g_-\|\leq 1+f_0(x_0)+\gamma<1+G(f_0)(x_0)\leq \|G(f_0)-G(g_-)\|.$$

    If, however, $G(f_0)(x_0)<f_0(x_0),$ we fix $0<\gamma<\min\{f_0(x_0)-G(f_0)(x_0),|1-f_0(x_0)|,|-1-f_0(x_0)|\}$ (the minimum is positive for the same reason as in the first case) and use $g_+$ from Lemma \ref{lem: C(K) g+g-} for $x_0$ and $f_0$. Then we have
        \begin{align*}
        \|G(f_0)-G(g_+)\|&\geq |G(f_0)(x_0)-G(g_+)(x_0)|\\&
        =|G(f_0)(x_0)-1|\\&
        =1-G(f_0)(x_0)\\&
        > 1-f_0(x_0)+\gamma\\&
        \geq\|f_0-g_+\|.
    \end{align*}

    In conclusion, we see that $G$ is not a nonexpansive mapping and this is a contradiction. Hence, $G$ has to be the identity mapping.
\end{proof}
Now we present the main result regarding $C(K)$ spaces.
\begin{theorem}[Theorem~\ref{thm:collect}, (\ref{CK})]\label{thm: C(K)-result}
    Let $K$ be a compact Hausdorff topological space. Surjective isometries on $\mathbb{B}_{C(K)}$ are extremal in the space $\mc{M}$ of nonexpansive mappings $\mathbb{B}_{C(K)}\to \mathbb{B}_{C(K)}$.
\end{theorem}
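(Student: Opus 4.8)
The plan is to invoke Lemma~\ref{lemma:id_reduction} to reduce the problem to a single mapping, and then to chain together the two propositions just established. Since $\mathbb{B}_{C(K)}$ is a closed, convex subset of $C(K)$ with nonempty interior, Lemma~\ref{lemma:id_reduction} asserts that every surjective isometry of $\mathbb{B}_{C(K)}$ is extremal in $\mc{M}$ precisely when the identity $\id\colon\mathbb{B}_{C(K)}\to\mathbb{B}_{C(K)}$ is extremal in $\mc{M}$. The entire task therefore collapses to showing that $\id$ admits no nontrivial convex representation.

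To this end I would take $\lambda\in(0,1)$ and $F,G\in\mc{M}$ with $\id=(1-\lambda)F+\lambda G$, and aim to prove $F=G=\id$, which exhibits the representation as trivial. First, Proposition~\ref{prop: F,G as id+-1} applies verbatim and yields
\begin{equation*}
  F(f)(x)=G(f)(x)=f(x)=\pm1\qquad\text{for all }f\in S_{C(K)},\ x\in K\text{ with }f(x)=\pm1.
\end{equation*}
The crucial observation is that this is exactly the standing hypothesis of Proposition~\ref{prop: G is sur isom all K}. Applying that proposition once to $F$ and once to $G$ forces $F=\id$ and $G=\id$; hence the only convex representations of $\id$ are trivial, and $\id$ is extremal.

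With both substantial lemmas already in hand, the theorem is a short synthesis rather than a genuinely new argument, so I do not anticipate a serious obstacle at this final step. The only care required is bookkeeping: verifying that the conclusion of Proposition~\ref{prop: F,G as id+-1}, which is stated for $F$ and $G$ jointly over norm-one functions attaining $\pm1$, matches the assumption of Proposition~\ref{prop: G is sur isom all K} letter for letter, and that $\mathbb{B}_{C(K)}$ indeed has nonempty interior so that Lemma~\ref{lemma:id_reduction} is applicable; both checks are immediate. All the real content lives in the Urysohn-based construction of Lemma~\ref{lem: C(K) g+g-} and its deployment in Proposition~\ref{prop: G is sur isom all K}, which together control the behaviour of a nonexpansive mapping on functions of norm one; the theorem itself merely packages these ingredients.
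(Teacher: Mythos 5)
Your proposal is correct and follows essentially the same route as the paper's own proof: reduce to the identity via Lemma~\ref{lemma:id_reduction}, extract the boundary behaviour of $F$ and $G$ from Proposition~\ref{prop: F,G as id+-1}, and then apply Proposition~\ref{prop: G is sur isom all K} to each of $F$ and $G$ to conclude $F=G=\id$. The bookkeeping points you flag (nonempty interior of $\mathbb{B}_{C(K)}$, matching of hypotheses between the two propositions) are exactly the only checks needed, and they go through as you say.
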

\begin{proof}
    Using Lemma \ref{lemma:id_reduction}, it is sufficient to check that the identity mapping is extremal in $\mc{M}$. Assume there are nonexpansive mappings $F,G\colon \mathbb{B}_{C(K)}\to \mathbb{B}_{C(K)}$ and $\lambda\in (0,1)$ such that for $\id\colon \mathbb{B}_{C(K)}\to \mathbb{B}_{C(K)}$ we have
    $$\id=(1-\lambda) F+ \lambda G.$$
    Applying Proposition \ref{prop: F,G as id+-1} we get that $F$ and $G$ satisfy (\ref{eq: F(f)(x)=G(f)(x)=id}) for every $f\in \mathbb{B}_{C(K)}$ and $x\in K$ such that $|f(x)|=1$. This implies by Proposition \ref{prop: G is sur isom all K} that both $F$ and $G$ are the identity mapping. Hence, $\id$ is extremal. 
\end{proof}
\begin{remark}
  Let us remark a number of observations concerning this theorem:
  \begin{enumerate}[(i)]
  \item Both Lemma~\ref{lem: surj isom extr among lin nonexp} and Theorem~\ref{thm: C(K)-result} assert extremality of surjective isometries in different settings. The form of extremality given in Theorem~\ref{thm: C(K)-result} may be thought of as stronger than that of Lemma~\ref{lem: surj isom extr among lin nonexp}, since extremality in Theorem~\ref{thm: C(K)-result} is taken with respect to all nonexpansive mappings, not only the linear ones.
  \item Normally the term linear is reserved for mappings between two vector spaces. However, in the present section we also refer to linear mappings $K\to K$, where $K$ is a subset of a vector space $V$. We call a mapping $K\to K$ linear if it may be extended to a linear mapping $V\to V$.
  \item If the underlying compact space $K$ is metrisable and only has a finite number of accumulation points, by~\cite{Fakhoury} the unit ball $C(K)$ is expand-contract plastic. Hence in this case, every bijective nonexpansive self-mapping of $\mathbb{B}_{C(K)}$ is extremal.
  \end{enumerate}
\end{remark}
Regarding Theorem~\ref{thm:collect} (\ref{czero}), note that a version of Theorem \ref{thm: C(K)-result} can be stated for the unit ball of $c_0$, whereas the idea of the proof differs very little, being even simpler, since the choice of elements analogous to $g_+$ and $g_-$ provided by Lemma \ref{lem: C(K) g+g-} in the $\mathbb{B}_{C(K)}$ case, is rather obvious in the case of $\mathbb{B}_{c_0}.$ In fact, in the setting of linear mappings on $\mathbb{B}_{c_0}$ we can say even more, namely we have the following theorem.

\begin{theorem}\label{thm: T extr iff phi=1}
A linear mapping $T=(\phi_1,\phi_2,\dots)$ on $\mathbb{B}_{c_0}$ is extremal if and only if $\phi_i$ is an extreme point of $\mathbb{B}_{\ell_1}$ for every $i\in \mathbb{N}.$
\end{theorem}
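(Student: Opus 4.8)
The plan is to prove the two implications separately, disposing of the easy reverse direction first and reserving the real content for the forward one. Throughout I identify $T$ with its sequence of coordinate functionals: since $T$ is a nonexpansive linear self-map of $\mathbb{B}_{c_0}$ fixing $0$, each coordinate map $f\mapsto (Tf)_i=\phi_i(f)$ is a bounded linear functional on $c_0$ of norm at most one, so $\phi_i\in\mathbb{B}_{\ell_1}$ after the identification $c_0^{*}=\ell_1$. I will also invoke the standard description of the extreme points of $\mathbb{B}_{\ell_1}$, namely that they are exactly the signed unit vectors $\pm e_k$, $k\in\N$; equivalently, $\phi_i$ is extreme if and only if $(Tf)_i=\epsilon_i f(k_i)$ for some sign $\epsilon_i\in\{-1,1\}$ and index $k_i\in\N$.

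For the reverse direction I argue by contraposition. If some $\phi_j$ fails to be an extreme point of $\mathbb{B}_{\ell_1}$, I write $\phi_j=\tfrac12(\psi_1+\psi_2)$ with $\psi_1\neq\psi_2$ in $\mathbb{B}_{\ell_1}$, and define $G,H\colon\mathbb{B}_{c_0}\to\mathbb{B}_{c_0}$ to agree with $T$ in every coordinate except the $j$-th, where I apply $\psi_1$, respectively $\psi_2$. Since only a single coordinate is altered, both maps still send $c_0$ into $c_0$ and the unit ball into itself (each replacement functional lies in $\mathbb{B}_{\ell_1}$); they are linear, nonexpansive and distinct, and $T=\tfrac12(G+H)$, so $T$ is not extremal.

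The main work is the forward direction. Assume every $\phi_i$ is extreme, so $(Tf)_i=\epsilon_i f(k_i)$, and suppose $T=(1-\lambda)G+\lambda H$ for some $\lambda\in(0,1)$ and $G,H\in\mc{M}$; I aim to show $G=T$, the argument for $H$ being identical. Fix $f\in\mathbb{B}_{c_0}$ and a coordinate $i$, and write $k=k_i$. The key step is to replace $f$ by the two functions $f^{+}$ and $f^{-}$ obtained from $f$ by resetting the single entry $f(k)$ to $+1$, respectively $-1$; these remain in $\mathbb{B}_{c_0}$, and $(Tf^{\pm})_i=\pm\epsilon_i\in\{-1,1\}$ is an extreme value of $[-1,1]$. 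Since $(1-\lambda)G(f^{\pm})_i+\lambda H(f^{\pm})_i$ equals this extreme value while both summands lie in $[-1,1]$, a convexity argument exactly as in Proposition~\ref{prop: F,G as id+-1} forces $G(f^{\pm})_i=(Tf^{\pm})_i=\pm\epsilon_i$.

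Now I combine this with the nonexpansiveness of $G$: from $\|f^{+}-f\|_\infty=1-f(k)$ and $\|f^{-}-f\|_\infty=1+f(k)$ I obtain
\begin{equation*}
  |\epsilon_i-G(f)_i|\le 1-f(k)\qquad\text{and}\qquad |\epsilon_i+G(f)_i|\le 1+f(k),
\end{equation*}
and a short case distinction on the sign $\epsilon_i$ shows that together these pin down $G(f)_i=\epsilon_i f(k)=(Tf)_i$. As $f$ and $i$ were arbitrary, $G=T$, and likewise $H=T$, so $T$ is extremal. I expect the only delicate point to be the sign bookkeeping in this last step; everything else—validity of $f^{\pm}$, the boundary argument, and the two estimates—is routine. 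The principal obstacle is thus simply identifying the correct perturbations $f^{\pm}$ that let the one-dimensional extremality of $\mathbb{B}_{\ell_1}$ propagate to extremality of the whole map.
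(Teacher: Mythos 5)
Your proof is correct. In the forward (sufficiency) direction you take essentially the paper's route: the paper reduces extremality of $T$ to Lemmas~\ref{lem: T-pi-eps, f,g} and~\ref{lem: g is T-pi-eps}, stated there without proof as analogues of Propositions~\ref{prop: F,G as id+-1} and~\ref{prop: G is sur isom all K}, and your perturbations $f^{\pm}$ are precisely the ``obvious'' $c_0$-analogues of the functions $g_{\pm}$ of Lemma~\ref{lem: C(K) g+g-}; your convexity step is the content of the first of these lemmas, and your pair of nonexpansiveness estimates gives a direct (rather than by-contradiction) proof of the second, so in effect you supply the details the paper omits. In the reverse (necessity) direction you genuinely depart from the paper: there the argument is split into two constructive cases --- some $\|\phi_i\|<1$, handled by splitting off a norm-one functional and the zero functional, or some norm-one $\phi_i$ with at least two nonzero entries, handled by decomposing $\phi_i$ over its support and regrouping a possibly infinite convex combination into two mappings --- whereas you simply write a non-extreme $\phi_j$ as a midpoint $\tfrac12(\psi_1+\psi_2)$ of two distinct points of $\mathbb{B}_{\ell_1}$ and alter only the $j$-th coordinate functional. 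Your version is shorter, treats both of the paper's cases at once, and needs no explicit description of the extreme points of $\mathbb{B}_{\ell_1}$ in that direction; what the paper's version buys is an explicit exhibition of the decomposition, which also motivates the normal form~\eqref{eq: lin extr charac c_0}. Two cosmetic remarks: your final sign case-distinction collapses if you multiply both estimates by $\varepsilon_i$, and when invoking the definition of extremality it is worth noting that $T=\tfrac12(G+H)$ with $G\neq H$ automatically forces $G\neq T$ and $H\neq T$.
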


As a start we present a simple characterisation of all linear mappings on $\mathbb{B}_{c_0}$ whose coordinate functionals are extreme points of the dual unit ball.
Recall that in $\mathbb{B}_{\ell_1}$ the extreme points are standard unit vectors and their opposite vectors, i.e. $\pm e_k=(0,\dots,\overbrace{\pm 1}^{k-th},0,0,\dots).$
This means that all linear mappings whose coordinate functionals are extreme points of the dual unit ball are of the form
\begin{equation}\label{eq: lin extr charac c_0}
T=(\phi_1,\phi_2,\dots)=(\varepsilon(1) e_{\pi(1)},\varepsilon(2) e_{\pi(2)},\dots),
\end{equation}
with $\pi\colon \N\to\N$ with the property that for every $k\in \pi(\N)$ the set $I_k:=\{i\in\N\colon \pi(i)=k\}$ is finite and $\varepsilon(i)=\pm 1$ for $i\in\mathbb{N}$. The condition on $I_k$ assures that our mapping has values in $c_0$.
Note that for some $i\in \N$ we may have $i\notin \pi(\N),$ i.e. not all extreme points need to be used.

\begin{example}\label{ex: mappings of form 7}
Some examples of linear mappings of the form (\ref{eq: lin extr charac c_0}) are $T=(-e_1,-e_2,-e_3,\dots)$ with $I_i=\{i\}$ for every $i\in\pi(\N)=\mathbb{N}$ and $\varepsilon=(-1,-1,\dots),$  and  $S=(e_1,e_1,e_3,e_3,e_5,e_5,\dots)$ with $I_i=\{i,i+1\}$ for every $i\in \pi(\N)=\{2n-1, n\in \mathbb{N}\}$ and $\varepsilon=(1,1,\dots).$
However, $U=(e_1,e_1,e_1,\dots)$ with $I_1=\mathbb{N}$ and $\varepsilon=(1,1,\dots)$ is not a linear mapping of the from (\ref{eq: lin extr charac c_0}), since $U((1,0,0,\dots))=(1,1,1,\dots)\notin c_0.$ 
\end{example}

\begin{remark}\label{rem: lin surj is lin extr c_0}
It is known due to Banach (see \cite{Banach}, Section~XI.5) that linear surjective isometries (called rotations in his monograph) on $\mathbb{B}_{c_0}$ are of the form
\begin{equation}\label{eq: surj isom c_0}
T(x)(i)=\varepsilon(i)x(\overline{\pi}(i))\quad \textnormal{for every}\quad i\in\N,
\end{equation}
where $(\varepsilon(i))$ is a sequence in $\{-1,1\}$ that corresponds to $T$ and $\overline{\pi}\colon \N\to \N$ is a permutation on the set $\N.$

Due to Lemma \ref{lem: Mankiewicz} and the previous remark it is easy to see that all surjective isometries on $\mathbb{B}_{c_0}$ have the form (\ref{eq: surj isom c_0}) which is a special case of the form (\ref{eq: lin extr charac c_0}). However, it is clear that not every linear mapping of the form (\ref{eq: lin extr charac c_0}) is a surjective isometry (see e.g. $S$ in  Example \ref{ex: mappings of form 7}).
\end{remark}

Before we formalise the proof of Theorem \ref{thm: T extr iff phi=1} we state two lemmas that are used in the proof. These lemmas are analogous to the lemmas used in the proof of Theorem \ref{thm: C(K)-result} for the case of $\mathbb{B}_{C(K)},$ which is why we present these results without proofs as the ideas for the proof are the same as the ones used before.
\begin{lemma}[Compare with Proposition \ref{prop: F,G as id+-1}]\label{lem: T-pi-eps, f,g}
Let $T\colon \mathbb{B}_{c_0}\to \mathbb{B}_{c_0}$ be a mapping of the form (\ref{eq: lin extr charac c_0}) with $\varepsilon$ and $\pi$. If there exist $f,g\in \mathcal{M}$ and $\lambda\in (0,1)$ such that $T=(1-\lambda) f+ \lambda g,$ then $f$ and $g$ must be such that for every $x\in S_{c_0}$ and every $k\in \mathbb{N}$ with $x(k)=1$ or $x(k)=-1,$ we have $f(x)(i)=g(x)(i)=\varepsilon(i)x({\pi(i)})=\varepsilon(i)x(k),$ for every $i\in I_k=\{i\in\N\colon \pi(i)=k\}.$ 
\end{lemma}
\begin{lemma}[Compare with Proposition \ref{prop: G is sur isom all K}]\label{lem: g is T-pi-eps}
If a nonexpansive mapping $g\colon \mathbb{B}_{c_0}\to \mathbb{B}_{c_0}$ is such that there exist $\pi\colon \mathbb{N}\to \mathbb{N}$ and a sequence of ones and minus ones $\varepsilon$ such that for every $x\in S_{\mathbb{B}_{c_0}}$, for every $k\in \mathbb{N}$ with $x(k)=\pm 1$ and for every $i\in I_k$ we have $g(x)(i)=\varepsilon(i)x_{\pi(i)}=\varepsilon(i)x(k),$ then $g$ is a linear mapping of the form (\ref{eq: lin extr charac c_0}) with exactly those $\varepsilon$ and $\pi.$
\end{lemma}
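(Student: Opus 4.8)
The plan is to mirror the proof of Proposition~\ref{prop: G is sur isom all K}, replacing the Urysohn-based construction of Lemma~\ref{lem: C(K) g+g-} with the far simpler device of perturbing a single coordinate. I would argue by contradiction. A mapping of the form~(\ref{eq: lin extr charac c_0}) for the given $\varepsilon$ and $\pi$ is precisely the one satisfying $g(x)(i)=\varepsilon(i)x(\pi(i))$ for all $x\in\mathbb{B}_{c_0}$ and all $i\in\N$, so negating the conclusion yields some $x_0\in\mathbb{B}_{c_0}$ and some output index $i_0\in\N$ with $g(x_0)(i_0)\neq\varepsilon(i_0)x_0(k_0)$, where I write $k_0:=\pi(i_0)$, so that $i_0\in I_{k_0}$. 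First I would note that necessarily $|x_0(k_0)|<1$: if $|x_0(k_0)|=1$ then $x_0\in S_{c_0}$, and the hypothesis applied at the coordinate $k_0$ and the index $i_0\in I_{k_0}$ already forces $g(x_0)(i_0)=\varepsilon(i_0)x_0(k_0)$, a contradiction.

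The key construction is to define $y_+,y_-\in c_0$ agreeing with $x_0$ in every coordinate except the $k_0$-th, where I set $y_+(k_0):=1$ and $y_-(k_0):=-1$. Because only one coordinate is changed and $|x_0(k_0)|<1$, both $y_\pm$ lie in $S_{c_0}$, and
\[
  \|x_0-y_+\|=1-x_0(k_0),\qquad\|x_0-y_-\|=1+x_0(k_0).
\]
Since $|y_\pm(k_0)|=1$ and $i_0\in I_{k_0}$, the hypothesis applies to $y_\pm$ and gives $g(y_+)(i_0)=\varepsilon(i_0)$ and $g(y_-)(i_0)=-\varepsilon(i_0)$. This is exactly the analogue of the fact, used in Proposition~\ref{prop: G is sur isom all K}, that $G(g_\pm)(x_0)=\pm1$.

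I would then reach the contradiction through nonexpansiveness, just as in Proposition~\ref{prop: G is sur isom all K}. Reading off the $i_0$-th coordinate,
\[
  \|g(x_0)-g(y_\pm)\|\ge|g(x_0)(i_0)-g(y_\pm)(i_0)|,
\]
and comparing with the distances $\|x_0-y_\pm\|$ above. Choosing the sign of the perturbation according to whether $g(x_0)(i_0)$ lies above or below $\varepsilon(i_0)x_0(k_0)$ (and keeping track of the sign $\varepsilon(i_0)$), one of the two inequalities $\|g(x_0)-g(y_+)\|\le\|x_0-y_+\|$ or $\|g(x_0)-g(y_-)\|\le\|x_0-y_-\|$ rearranges to $g(x_0)(i_0)=\varepsilon(i_0)x_0(k_0)$, contradicting the choice of $x_0,i_0$. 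As $x_0$ and $i_0$ were arbitrary, this establishes $g(x)(i)=\varepsilon(i)x(\pi(i))$ for all $x$ and $i$, which is exactly the assertion that $g$ is the linear mapping of the form~(\ref{eq: lin extr charac c_0}) associated with $\varepsilon$ and $\pi$.

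The reassuring point is that, unlike the $C(K)$ case, there is essentially no hard step: the one-coordinate perturbation is exact, so no auxiliary error term $\gamma$ and no appeal to Urysohn's lemma are needed, and the perturbed points automatically stay in $c_0$ (a single coordinate is altered) and on the unit sphere. The only thing requiring genuine care is the sign bookkeeping: there are four cases according to the sign of $\varepsilon(i_0)$ and the direction of the strict inequality $g(x_0)(i_0)\neq\varepsilon(i_0)x_0(k_0)$, and in each one must check both that the relevant gap $|g(x_0)(i_0)\mp\varepsilon(i_0)|$ is evaluated with the correct orientation and that the resulting inequality collapses to the desired equality. This is the analogue of the two-case split in Proposition~\ref{prop: G is sur isom all K}, and it is where I expect the main (albeit modest) obstacle to lie.
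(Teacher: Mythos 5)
Your proof is correct and is exactly the argument the paper intends: the paper omits the proof of this lemma precisely because, as it remarks, the obvious analogue of $g_\pm$ from Lemma~\ref{lem: C(K) g+g-} in the $c_0$ setting is the one-coordinate perturbation $y_\pm$ you construct, after which the contradiction via nonexpansiveness proceeds as in Proposition~\ref{prop: G is sur isom all K}. The only cosmetic remark is that your two inequalities $|g(x_0)(i_0)-\varepsilon(i_0)|\le 1-x_0(k_0)$ and $|g(x_0)(i_0)+\varepsilon(i_0)|\le 1+x_0(k_0)$ can simply be combined to force $g(x_0)(i_0)=\varepsilon(i_0)x_0(k_0)$ outright, so the four-way sign case split you flag as the main obstacle is not really needed.
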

\begin{proof}[Proof of Theorem \ref{thm: T extr iff phi=1}.]
Assume firstly, that $T=(\phi_1,\phi_2,\dots)$ is a linear extremal mapping on $\mathbb{B}_{c_0}.$ We show that this implies that all $\phi_i$ are extreme points of $\mathbb{B}_{\ell_1}$, i.e. $T$ is of the form (\ref{eq: lin extr charac c_0}).

Assume to the contrary, that $T$ is not of the form (\ref{eq: lin extr charac c_0}). Firstly, let us observe the case where there is $i\in \mathbb{N}$ such that $\|\phi_i\|=\mu\in [0,1).$ Then we can define $f=(\phi_1,\phi_2,\dots,\phi_f,\phi_{i+1},\phi_{i+2},\dots)$ and $g=(\phi_1,\phi_2,\dots,\dots,0,\phi_{i+1},\phi_{i+2}),$ where $\phi_f\in S_{\ell_1}$ is such that $\phi_i=(1-\mu) \phi_f.$ Then
$$(1-\mu) f+ \mu g=T,$$
with $f$ and $g$ clearly nonexpansive. This contradicts the extremality of $T$. Consequently, $\phi_k$ has to be norm 1 for every $k\in\mathbb{N}.$

Secondly, assume all $\phi_i$ are of norm one, but there exists $i\in\mathbb{N}$ such that $\phi_i\neq \pm e_j$ for any $j\in\mathbb{N}.$ This means that $\phi_i$ has more than one non-zero coordinates. For this $i$ we define helpful coordinate functionals. Denote by $J$ the set of indices of non-zero terms of $\phi_i=(\xi_1,\xi_2,\dots).$ Obviously, $J$ can be either finite or infinite. In any case, we define for every $j\in J$
$$\phi_{i,j}=(0,0,\dots,\overbrace{\sgn(\xi_j)}^{j-th},0,0,\dots).$$
Notice that
\[\phi_i=|\xi_{j_1}|\phi_{i,{j_1}}+|\xi_{j_2}|\phi_{i,{j_2}}+\dots.\]
Taking for every $j\in J$ 
$$f_{j}=(\phi_1,\phi_2,\dots,\phi_{i-1},\phi_{i,{j}},\phi_{i+1},\phi_{i+2},\dots)$$
we get that
$$T=|\xi_{j_1}|f_{j_1}+|\xi_{j_2}|f_{j_2}+\dots.$$
Now, since $\phi_i\in S_{\ell_1}$ we know that $\sum_{i=1}^\infty |\xi_i|=\sum_{i\in J} |\xi_i|=1.$ Therefore, we may divide the set $J$ into two parts, e.g. $J=\{j_k\}\cup \bigcup_{j\in {J\backslash \{j_k\}}}\{j\}$ for some fixed $k$. For simplicity, assume $k=1$. Take $f:=\sum_{j\in {J\backslash \{j_1\}}}\frac{|\xi_j|}{1-|\xi_{j_1}|}f_j$ and $g:=f_{j_1}.$ Observe that $\norm{f}\leq 1$.
Then we can express $T$ as a nontrivial convex combination as follows
\begin{equation*}
  T=(1-|\xi_{j_1}|)\sum_{j\in {J\backslash \{j_1\}}}\frac{|\xi_j|}{1-|\xi_{j_1}|}f_j+|\xi_{j_1}| f_{j_1}=(1-|\xi_{j_1}|)f+ |\xi_{j_1}| g.   
\end{equation*}
    
Hence, $T$ is not extremal, which is a contradiction. This concludes the proof for necessity.

Assume now, that $T$ is of the form (\ref{eq: lin extr charac c_0}). We aim to show that consequently $T$ is extremal. Suppose $T$ is not extremal. Then we can find $\lambda\in(0,1)$ and distinct nonexpansive mappings $f,g$ such that $T=(1-\lambda) f+ \lambda g.$ Applying Lemma \ref{lem: T-pi-eps, f,g} and Lemma \ref{lem: g is T-pi-eps} we see that $f=g,$ which is a contradiction. Hence, $T$ is extremal. This completes the proof of sufficiency. 
\end{proof}

\begin{corollary}[Theorem~\ref{thm:collect}, (\ref{czero})]
Every surjective isometry on $\mathbb{B}_{c_0}$ is extremal.
\end{corollary}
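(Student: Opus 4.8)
The plan is to obtain this as an immediate consequence of Theorem~\ref{thm: T extr iff phi=1}, combined with the structural description of surjective isometries of $\mathbb{B}_{c_0}$ recorded in Remark~\ref{rem: lin surj is lin extr c_0}. First I would invoke Lemma~\ref{lem: Mankiewicz} to note that any surjective isometry $T\colon \mathbb{B}_{c_0}\to \mathbb{B}_{c_0}$ is the restriction of a linear mapping; this is what makes Theorem~\ref{thm: T extr iff phi=1} applicable to $T$ in the first place.

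Next I would appeal to Banach's classical description of the rotations of $c_0$, quoted in \eqref{eq: surj isom c_0}, which gives $T(x)(i)=\varepsilon(i)x(\overline{\pi}(i))$ for some permutation $\overline{\pi}$ of $\N$ and some sign sequence $(\varepsilon(i))\subseteq\{-1,1\}$. Reading off the coordinate functionals of $T=(\phi_1,\phi_2,\dots)$, this means $\phi_i=\varepsilon(i)e_{\overline{\pi}(i)}$ for every $i\in\N$, so that each coordinate functional is a signed standard unit vector.

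The final step is simply to recall that the extreme points of $\mathbb{B}_{\ell_1}$ are precisely the vectors $\pm e_k$, $k\in\N$. Hence each $\phi_i=\varepsilon(i)e_{\overline{\pi}(i)}$ is an extreme point of $\mathbb{B}_{\ell_1}$, and Theorem~\ref{thm: T extr iff phi=1} immediately yields that $T$ is extremal.

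There is no genuine obstacle here, as all the substantive work has already been done in proving Theorem~\ref{thm: T extr iff phi=1}. The only point worth a moment's attention is the bookkeeping verifying that the form \eqref{eq: surj isom c_0} really is a special instance of \eqref{eq: lin extr charac c_0}: the fibre condition demanding that each $I_k=\{i\colon \overline{\pi}(i)=k\}$ be finite holds automatically, since $\overline{\pi}$ is a bijection and hence every fibre is a singleton. This is exactly the observation already contained in Remark~\ref{rem: lin surj is lin extr c_0}, so the corollary follows at once.
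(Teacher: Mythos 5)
Your proposal is correct and follows exactly the paper's own route: Lemma~\ref{lem: Mankiewicz} to get linearity, then Remark~\ref{rem: lin surj is lin extr c_0} (Banach's rotation form, which is a special case of \eqref{eq: lin extr charac c_0}), then Theorem~\ref{thm: T extr iff phi=1}. Your additional observation that the fibre-finiteness condition is automatic because a permutation has singleton fibres is precisely the content already folded into that remark, so there is nothing to add.
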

\begin{proof}
Let $T$ be a surjective isometry on $\mathbb{B}_{c_0}.$ We know by Lemma \ref{lem: Mankiewicz} that $T$ is necessarily the restriction of a linear mapping $X\to X$. Considering Remark \ref{rem: lin surj is lin extr c_0} we know that $T$ is of the form (\ref{eq: lin extr charac c_0}). Applying Theorem \ref{thm: T extr iff phi=1} we get that $T$ is extremal. 
\end{proof}

To end this section, we point out some obvious, yet important remarks. Firstly, not all isometries on $\mathbb{B}_{c_0}$ are extremal.

\begin{example}
Consider the mappings
	\begin{equation*}
	T_{\lambda}\colon \mathbb{B}_{c_{0}}\to \mathbb{B}_{c_{0}},\qquad x\mapsto (\lambda x_{1},x_{1},x_{2},x_{3},\ldots)
	\end{equation*} 
	for $\lambda\in [0,1]$. It is easy to check that these mappings are linear isometries that are not of the form (\ref{eq: lin extr charac c_0}) for $\lambda\in [0,1)$. Notice that we have
	\begin{equation*}
	T_{1/2}=\frac{1}{2}T_{0}+\frac{1}{2}T_{1},
	\end{equation*}
	which means $T_{1/2}$ is a non-extremal linear isometry.
\end{example}

We know from Theorem \ref{thm: T extr iff phi=1} the characterisation for all linear extremal mappings on $\mathbb{B}_{c_0}$. However, dropping the condition of linearity, we still have extremal mappings.

\begin{example}
  Consider the mappings
  \begin{equation*}
    T_{\lambda}\colon \mathbb{B}_{c_{0}}\to \mathbb{B}_{c_{0}},\qquad x\mapsto (\lambda ,x_{1},x_{2},x_{3},\ldots)
  \end{equation*} 
  for $\lambda\in [-1,1]$. It is easy to check that these mappings are non-linear isometries. If $\lambda\in (-1,1),$ then $T$ is nonextremal. However, if $\lambda\in \{-1,1\},$ then $T$ is extremal. With the same idea in mind, more extremal nonlinear isometries can be constructed.
\end{example}

\section{On the extremality of the typical nonexpansive mapping}\label{sec: extr of typ nonexp map}
The goal of this section is to prove Theorem~\ref{thm:Pf}.
Let $X$ be a Banach space, $C\subseteq X$ be a non-empty, non-singleton, closed and bounded set and $\mc{M}=\mc{M}(C)$ denote the set of nonexpansive mappings $C\to C$. In the present section we view $\mc{M}$ as a complete metric space, equipped with the metric $d_{\infty}(f,g)=\sup_{x\in C}\norm{f(x)-g(x)}$. 

For $f\in\mc{M}$, consider the set
\begin{equation}\label{eq:Pf}
  P_{f}:=\set{g\in \mc{M}\colon \exists\lambda\in[0,1)\,\exists h\in \mc{M}\,\text{s.t.}f=(1-\lambda)g+\lambda h}
\end{equation}
of all those mappings $g\in\mc{M}$ which participate in a convex combination giving $f$. Note that $f\in P_{f}$ for every $f\in \mc{M}$. In Sections~\ref{sec:surj_isom} and \ref{sec:CK_c0} we have studied extremal mappings in $\mc{M}$, which may be equivalently described as those mappings $f\in\mc{M}$ with $P_{f}=\set{f}$. The objective of the present section is to investigate how close the typical mapping $f\in\mc{M}$ is to being extremal, where we understand the notion of a typical $f\in\mc{M}$ in the sense of the Baire Category Theorem. Since extremality of a mapping $f\in\mc{M}$ is equivalent to the condition $P_{f}=\set{f}$, we may interpret a mapping $f$ as being close to extremal if its set $P_{f}$ is small. We will show that a typical $f\in\mc{M}$ has the property that $P_{f}$ is a negligible subset of $\mc{M}$ in a strong sense. More precisely, we obtain the following result:
\begin{theorem}[Theorem~\ref{thm:Pf}]
  Let $X$ be normed space, $C\subseteq X$ be a non-empty, non-singleton, closed, bounded and convex set and $\mc{M}=\mc{M}(C)$ be the space of nonexpansive mappings $C\to C$ equipped with the metric $d_{\infty}(f,g)=\sup_{x\in C}\norm{f(x)-g(x)}$. For each $f\in \mc{M}$ let 
  \begin{equation*}
    P_{f}:=\set{g\in\mc{M}\colon \exists \lambda\in [0,1),\, \exists h\in \mc{M}\,\text{s.t.}\,f=(1-\lambda)g+\lambda h}
  \end{equation*}
  and let $\mc{N}$ denote the set of strict contractions $C\to C$. Then the following statements hold:
  \begin{enumerate}[(i)]
  \item\label{M-Pf_convex} For every $f\in\mc{M}$, $P_{f}$ is $F_{\sigma}$, $\mc{M}\setminus P_{f}$ is convex and there exists an affine subspace $A_{f}$ of the space of continuous mappings $C\to X$ such that $P_{f}=\mc{M}\cap A_{f}$.
  \item\label{lipf=1} For every $f\in\mc{M}\setminus \mc{N}$, i.e. every $f\in\mc{M}$ with $\lip(f)=1$, we have that $P_{f}$ is a $\sigma$-upper porous subset of $\mc{M}$. In fact $P_{f}$ is a countable union of closed, upper porous subsets of $\mc{M}$.
  \end{enumerate}
\end{theorem}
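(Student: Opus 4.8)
The plan is to derive everything from a single reparametrisation of membership in $P_f$. For $g\in\mc M$ and $s\ge 0$ write $h_s:=(1+s)f-sg=f+s(f-g)$; unwinding the definition (with $s=(1-\lambda)/\lambda$) shows $g\in P_f$ iff $h_s\in\mc M$ for some $s>0$. The key first observation is a monotonicity in $s$: since $h_{s'}=(1-\tfrac{s'}{s})f+\tfrac{s'}{s}h_s$ is a convex combination of $f\in\mc M$ and $h_s$ whenever $0\le s'\le s$, and $\mc M$ is convex, the set $\{s\ge 0:h_s\in\mc M\}$ is an interval $[0,S_g]$, closed because $\mc M$ is closed and $s\mapsto h_s$ is continuous. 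Hence $g\in P_f\iff S_g>0\iff h_{1/k}\in\mc M$ for some $k\in\N$, so $P_f=\bigcup_k P_f^{(k)}$ with $P_f^{(k)}:=\{g\in\mc M:h_{1/k}\in\mc M\}=\mc M\cap\Phi_{1/k}^{-1}(\mc M)$, where $\Phi_s(g)=(1+s)f-sg$ is continuous (indeed a similarity scaling $d_\infty$ by $s$) and $\mc M$ is closed; each $P_f^{(k)}$ is therefore closed, giving the $F_\sigma$ claim.

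For the remaining assertions of (\ref{M-Pf_convex}) I would introduce the cone of feasible directions $T:=\{v:f+sv\in\mc M\text{ for some }s>0\}$, a convex cone by the same monotonicity-plus-convexity argument. By construction $g\in P_f\iff f-g\in T$; moreover for every $g\in\mc M$ one has $(1-s)f+sg=f+s(g-f)\in\mc M$ for $s\in(0,1]$, so $g-f\in T$ as well. Consequently, for $g\in\mc M$, membership $g\in P_f$ is equivalent to $f-g\in W:=T\cap(-T)$, the lineality space of $T$, a linear subspace of the continuous maps $C\to X$; this yields the affine subspace $A_f:=f+W$ with $P_f=\mc M\cap A_f$. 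Convexity of $\mc M\setminus P_f$ then follows from the cone structure: if $g_0,g_1\in\mc M\setminus P_f$ and $g_t=(1-t)g_0+tg_1$ satisfied $f-g_t\in T$, then writing $f-g_1=\tfrac1t(f-g_t)+\tfrac{1-t}{t}(g_0-f)$ as a nonnegative combination of the two elements $f-g_t\in T$ and $g_0-f\in T$ would force $f-g_1\in T$, i.e.\ $g_1\in P_f$, a contradiction.

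For part (\ref{lipf=1}) it suffices, by the decomposition above, to prove each closed set $P_f^{(k)}$ is upper porous. Here I would use $\lip(f)=1$ to select, at a suitable scale, a pair $x_0,y_0\in C$ with $\norm{f(x_0)-f(y_0)}\ge(1-\delta)\norm{x_0-y_0}$ and, by Hahn--Banach, a norm-one functional $\varphi$ with $\varphi(f(x_0)-f(y_0))=\norm{f(x_0)-f(y_0)}$. Since $g\in P_f^{(k)}$ forces $\Phi_{1/k}(g)$ to be nonexpansive on $(x_0,y_0)$, testing against $\varphi$ confines $P_f^{(k)}$ to a slab $\{g\in\mc M:\varphi(g(x_0)-g(y_0))\ge\beta\}$, where $\beta$ approaches the maximal value $\norm{x_0-y_0}$ of this $2$-Lipschitz affine functional as $\delta\to0$. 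To witness porosity at $q\in P_f^{(k)}$ I would then produce a nonexpansive self-map $\wt q$ of $C$, close to $q$, for which $\varphi(\wt q(x_0)-\wt q(y_0))$ drops below $\beta$ by a definite margin; as the functional is $2$-Lipschitz, a ball about $\wt q$ of radius proportional to $d_\infty(q,\wt q)$ lies in the complement of the slab and hence misses $P_f^{(k)}$.

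The main obstacle is exactly constructing this perturbation $\wt q$ with a porosity constant bounded away from zero uniformly in the scale. The difficulty is that $\lip(f)=1$ may be approached only along pairs $(x_0,y_0)$ whose separation $\norm{x_0-y_0}$ tends to $0$, so crossing the slab within distance $\varepsilon$ forces one to work at small scale; yet the obvious admissible perturbations that certainly remain in $\mc M$ --- convex combinations of $q$ with constant maps, or pre-composition of $q$ with a contraction of $C$ --- move $q$ by an amount governed by $\diam C$ while decreasing the functional only by order $\norm{x_0-y_0}$, which would degrade the porosity constant like $\norm{x_0-y_0}/\diam C$. A purely local modification near $x_0$ is likewise delicate, since the near-isometry of $f$ (inherited by the constrained $g$) pins the relevant Lipschitz data and forbids lowering $\varphi(g(x_0)-g(y_0))$ by an isolated change. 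The technical heart is therefore a scale-matched nonexpansive modification --- a careful flattening of $q$ on a region adapted to the pair $(x_0,y_0)$ --- delivering a decrease proportional to the displacement and a scale-independent porosity constant.
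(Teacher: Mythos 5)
Your part (i) is correct and takes a genuinely different route from the paper's. For the $F_\sigma$ claim, the paper decomposes $P_f$ into sets $P_{f,q}$ of mappings admitting a representation with $\lambda\in[q,1-q]$, $q\in\Q\cap(0,1/2)$, and proves each is closed by extracting convergent subsequences from the witnesses $(\lambda_n,h_n)$; your monotonicity-in-$s$ observation replaces the two existential quantifiers by the single condition $\Phi_{1/k}(g)\in\mc{M}$, so closedness becomes a one-line consequence of the continuity of $\Phi_{1/k}$ and the closedness of $\mc{M}$. For the affine slice, the paper sets $A_f=f+\Span\set{g-f\colon g\in P_f}$ and proves $\mc{M}\cap A_f\subseteq P_f$ by an averaging argument over a minimal linearly independent family of directions; your cone of feasible directions $T$ and its lineality space $W=T\cap(-T)$ produce the same subspace more transparently, and your derivation of the convexity of $\mc{M}\setminus P_f$ (writing $f-g_1$ as a nonnegative combination of $f-g_t$ and $g_0-f$) is a correct compression of the paper's computation. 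Both arguments rest on the same underlying ray/monotonicity lemma; yours buys shorter proofs.

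Part (ii), however, contains a genuine gap, and it is exactly the one you flag yourself: the scale-matched perturbation $\wt{q}$ is never constructed, and that construction is the decisive step --- the slab reduction preceding it is routine. The paper's solution is to perturb by \emph{precomposition} rather than by modifying values. Given the pair $x_0,y\in C$ at scale $\eta:=\norm{y-x_0}<\varepsilon$ with $\norm{f(y)-f(x_0)}>(1-\delta)\eta$ (such pairs exist at every scale by \cite[Lemma~2.4]{bargetz2016sigma}, so the smallness of the separation that worried you is not an obstacle), one takes $\wt{g}:=g\circ R_{\eta,x_0}$, where $R_{\eta,x_0}(x)=x-\eta(x-x_0)/\norm{x-x_0}$ if $\norm{x-x_0}>\eta$ and $R_{\eta,x_0}(x)=x_0$ otherwise. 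By \cite[Lemma~4.2]{Medjic2022} this map is nonexpansive; it maps $C$ into $C$ because each of its values is a convex combination of $x$ and $x_0$; it moves no point by more than $\eta$, so $d_\infty(\wt{g},g)\leq\eta<\varepsilon$; and it collapses the ball $x_0+\eta\mathbb{B}_X$, in particular both $x_0$ and $y$, to the single point $x_0$, so $\wt{g}(y)=\wt{g}(x_0)$. This is precisely the ``flattening adapted to $(x_0,y)$'' you call for: the displacement of $g$ is at most the pair separation $\eta$, while the difference quotient of $\wt{g}$ at the pair drops all the way to zero, not merely by $O(\eta/\diam C)$. The porosity then closes with a scale-free constant: if $d_\infty(g',\wt{g})\leq\alpha\, d_\infty(\wt{g},g)\leq\alpha\eta$ and $f=(1-\lambda)g'+\lambda h$ with $h\in\mc{M}$, then
\begin{equation*}
  1-\delta<\frac{\norm{f(y)-f(x_0)}}{\eta}\leq(1-\lambda)\,\frac{\norm{\wt{g}(y)-\wt{g}(x_0)}+2\alpha\eta}{\eta}+\lambda=2(1-\lambda)\alpha+\lambda,
\end{equation*}
whence $\lambda>(1-\delta-2\alpha)/(1-2\alpha)$, which for the choice $\delta=\alpha=q/(2(1+q))$ equals $(2-q)/2>1-q$; hence $B_{\mc{M}}(\wt{g},\alpha d_\infty(\wt{g},g))\cap P_{f,q}=\emptyset$ with $\alpha$ depending only on $q$ (in your parametrisation, only on $k$). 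Incidentally, the Hahn--Banach functional in your slab formulation is unnecessary --- the triangle inequality applied directly to $\norm{g'(y)-g'(x_0)}$ already gives the exclusion --- but with or without it, your proposal stops short of the one construction on which the theorem turns.
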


\begin{remark*}
  We remark that the set $\mc{N}$ in the above theorem is $\sigma$-lower porous, by \cite{bargetz2016sigma}. Therefore, the above theorem shows that for $f$ outside of a $\sigma$-lower porous subset of $\mc{M}$, the set $P_{f}$ is $\sigma$-upper porous and its complement $\mc{M}\setminus P_{f}$ is convex.
\end{remark*}
We will break up the proof of Theorem~\ref{thm:Pf} into several lemmas. Note that $\mc{M}$ in Theorem~\ref{thm:Pf} may be viewed as convex subset of the vector space of continuous mappings $C\to X$. Part \eqref{M-Pf_convex} of Theorem~\ref{thm:Pf} is actually something that holds for every convex subset $M$ of a vector space, whenever $P_{f}$ for $f\in M$ is defined according to \eqref{eq:Pf}. We establish this in the next three lemmas:
\begin{lemma}\label{lemma:affine_slice}
  Let $Y$ be a vector space, $M\subseteq Y$ be convex and for each $f\in M$ let
  \begin{equation*}
    P_{f}:=\set{g\in M\colon \exists\lambda\in[0,1)\,\exists h\in M\,\text{s.t.}f=(1-\lambda)g+\lambda h}.
  \end{equation*}
  Then there exists an affine subspace $A_{f}$ of $Y$ such that
  \begin{equation*}
    P_{f}=M\cap A_{f}.
  \end{equation*}
\end{lemma}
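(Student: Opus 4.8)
The plan is to realise $P_{f}$ as the intersection of $M$ with the affine subspace obtained by translating, by $f$, the \emph{lineality space} of the cone of feasible directions of $M$ at $f$. First I would introduce
\[
  D := \set{d \in Y : \exists\, t > 0 \text{ such that } f + td \in M},
\]
noting that $0 \in D$ since $f \in M$, and verify that $D$ is a convex cone. Closure under multiplication by positive scalars is immediate from the definition, while convexity uses the convexity of $M$: if $d_1,d_2 \in D$ are witnessed by $t_1,t_2 > 0$, then the convex combination $\frac{t_2}{t_1+t_2}(f+t_1 d_1)+\frac{t_1}{t_1+t_2}(f+t_2 d_2)$ equals $f+\frac{t_1 t_2}{t_1+t_2}(d_1+d_2) \in M$, so that $d_1+d_2 \in D$.

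Next I would set $L := D \cap (-D)$ and observe that, being a symmetric convex cone containing $0$, it is a linear subspace of $Y$; hence $A_{f} := f + L$ is an affine subspace, and it remains to prove $P_{f} = M \cap A_{f}$. The decisive observation --- the one that converts an inequality-type cone condition into an equality-type subspace condition --- is that every $g \in M$ automatically satisfies $g - f \in D$, because convexity of $M$ gives $f + t(g-f) = (1-t)f + tg \in M$ for all $t \in [0,1]$. Consequently, for $g \in M$, the subspace condition $g - f \in L$ is equivalent to the single extra requirement $f - g \in D$.

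It then remains to match this with the definition of $P_{f}$. If $g \in P_{f}$ via $f = (1-\lambda)g + \lambda h$ with $\lambda \in [0,1)$ and $h \in M$, then either $g = f$, giving $g - f = 0 \in L$, or $\lambda \in (0,1)$ and $h = f + \frac{1-\lambda}{\lambda}(f-g) \in M$, which exhibits $f - g \in D$ and hence $g - f \in L$. Conversely, if $g \in M$ with $g - f \in L$ and $g \neq f$, then $f - g \in D$ yields some $t > 0$ with $h := f + t(f-g) \in M$; the points $g,f,h$ are then collinear with $f = \frac{t}{1+t}g + \frac{1}{1+t}h$, so $f = (1-\lambda)g + \lambda h$ for $\lambda = \frac{1}{1+t} \in (0,1)$, whence $g \in P_{f}$. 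This gives $P_{f} = M \cap (f+L)$, as required.

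I do not expect a genuine obstacle: the argument is short, and the only points demanding care are the verification that $L = D \cap (-D)$ is truly a subspace (symmetry together with the convex-cone property) and the bookkeeping that turns the two-sided condition $\pm(f-g) \in D$ into the explicit convex combination. The real content is conceptual rather than computational, namely the recognition that intersecting the feasible-direction cone with $M$ symmetrises the defining condition into the lineality subspace $L$.
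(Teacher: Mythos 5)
Your proof is correct, and it takes a genuinely different route from the paper's. The paper sets $A_f := f+\Span\set{g-f\colon g\in P_f}$ (the affine hull of $P_f$) and must then establish the nontrivial inclusion $M\cap A_f\subseteq P_f$ by hand: it first proves two auxiliary facts --- that for $g\in P_f$ the line $\set{f+t(g-f)\colon t\in\R}$ meets $M$ inside $P_f$, and that $P_f$ is convex --- and then, given $g\in M\cap A_f$, writes $g-f$ in terms of a linearly independent family $g_i-f$ with $g_i\in P_f$, forces the coefficients to be positive by swapping each $g_i$ with its partner $h_i$ where needed, and averages to produce $\widetilde{g}\in P_f$ with $g$ lying on the ray from $f$ through $\widetilde{g}$. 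You instead take $A_f=f+L$, where $L=D\cap(-D)$ is the lineality space of the feasible-direction cone $D$ at $f$, and for you the hard inclusion essentially evaporates: since $g-f\in D$ holds automatically for every $g\in M$, membership of $g$ in $A_f$ collapses to the single condition $f-g\in D$, which the identity $h=f+\tfrac{1-\lambda}{\lambda}(f-g)$ matches exactly with the definition of $P_f$. Your argument is shorter, needs no linear-algebra bookkeeping (no minimal spanning family, no sign adjustments), and yields the convexity of $P_f$ for free, since $P_f=M\cap(f+L)$ is an intersection of convex sets, whereas the paper must prove that convexity separately (its Lemma~\ref{lemma:ray}) and feed it into the argument. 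What the paper's construction makes explicit is that its $A_f$ is the affine hull of $P_f$, hence the smallest affine subspace with the required property; but in fact your subspace coincides with it: any nonzero $d\in L$ admits $t>0$ with $g:=f+td\in M$ and $f-g=-td\in D$, so $g\in P_f$ and $d\in\Span\set{g-f\colon g\in P_f}$.
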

For the proof of Lemma~\ref{lemma:affine_slice} we need the following lemma:
\begin{lemma}\label{lemma:ray}
  Let $Y$, $M$, $f$ and $P_{f}$ for each $f\in M$ be given by Lemma~\ref{lemma:affine_slice}. Then the following two statements hold:
  \begin{enumerate}[(i)]
  	\item\label{ray} For each $f\in M$ and $g\in P_{f}$ we have \begin{equation*}
  	\set{f+t(g-f)\colon t\in\R}\cap M\subseteq P_{f}.
  	\end{equation*}
  	\item\label{cnvx} For each $f\in M$, $P_{f}$ is convex.
  \end{enumerate}
\end{lemma}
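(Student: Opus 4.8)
The plan is to exploit the elementary ``segment'' description of $P_{f}$: a point $g\in M$ lies in $P_{f}$ precisely when $f$ sits on a segment $[g,h]$ with $h\in M$ and $g$ carrying a strictly positive weight $1-\lambda>0$. Throughout I parametrise the line through $f$ and $g$ by $\ell(t)=f+t(g-f)$, so that $\ell(0)=f$ and $\ell(1)=g$.

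First I would prove \eqref{ray}. Fix $f\in M$ and $g\in P_{f}$. If $g=f$ the line is the single point $f\in P_{f}$ and there is nothing to do, so assume $g\neq f$; then the witnessing $\lambda$ cannot be $0$, and $f=(1-\lambda)g+\lambda h$ with $\lambda\in(0,1)$, $h\in M$. Solving for $h$ gives $h=\ell(t_{h})$ with $t_{h}=-\tfrac{1-\lambda}{\lambda}<0$, so $M$ contains a point of the line strictly on the far side of $f$ from $g=\ell(1)$. Now let $t\in\R$ satisfy $\ell(t)\in M$. The point $\ell(0)=f$ lies strictly between $\ell(t)$ and one of the two points of $M$ already located on the line: for $t>0$ between $\ell(t)$ and $\ell(t_{h})$ (since $t_{h}<0<t$), and for $t<0$ between $\ell(t)$ and $g=\ell(1)$ (since $t<0<1$); the case $t=0$ is trivial. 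In each nondegenerate case this writes $f=(1-\mu)\ell(t)+\mu k$ with $k\in M$ and $\mu\in(0,1)$, so $\ell(t)\in P_{f}$. I expect the only real care needed here is sign bookkeeping, to ensure that the coefficient $1-\mu$ multiplying $\ell(t)$ is positive, i.e.\ that $\ell(t)$ enters with admissible weight $\lambda=\mu\in[0,1)$.

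For \eqref{cnvx} I would combine two witnesses directly. Given $g_{1},g_{2}\in P_{f}$ with $f=(1-\lambda_{i})g_{i}+\lambda_{i}h_{i}$, $\lambda_{i}\in[0,1)$, $h_{i}\in M$, and $\theta\in[0,1]$, put $g_{\theta}=(1-\theta)g_{1}+\theta g_{2}\in M$. Solving each relation for $g_{i}$ and substituting yields
\[
  g_{\theta}=a\,f-b_{1}h_{1}-b_{2}h_{2},\quad a=\tfrac{1-\theta}{1-\lambda_{1}}+\tfrac{\theta}{1-\lambda_{2}},\quad b_{1}=\tfrac{(1-\theta)\lambda_{1}}{1-\lambda_{1}},\quad b_{2}=\tfrac{\theta\lambda_{2}}{1-\lambda_{2}},
\]
with $b_{1},b_{2}\geq 0$ and, by a one-line computation, $a-b_{1}-b_{2}=1$, hence $a=1+b_{1}+b_{2}\geq 1$. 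If $b_{1}+b_{2}=0$ then $g_{\theta}=f\in P_{f}$; otherwise rearranging gives $f=\tfrac{1}{a}g_{\theta}+\tfrac{b_{1}}{a}h_{1}+\tfrac{b_{2}}{a}h_{2}$, and collecting the last two terms as $\lambda h$ with $h=\tfrac{b_{1}}{b_{1}+b_{2}}h_{1}+\tfrac{b_{2}}{b_{1}+b_{2}}h_{2}\in M$ (convexity of $M$) and $\lambda=\tfrac{b_{1}+b_{2}}{a}=1-\tfrac{1}{a}\in[0,1)$ produces $f=(1-\lambda)g_{\theta}+\lambda h$. Thus $g_{\theta}\in P_{f}$, establishing convexity. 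The key checks are the identity $a-b_{1}-b_{2}=1$, which forces the coefficients of the final combination to sum to $1$, and the bound $a\geq1$, which keeps $\lambda<1$; both are routine, so I regard the heart of the lemma as the geometric observation in \eqref{ray} that membership in $P_{f}$ is witnessed by a point of $M$ on the far side of $f$.
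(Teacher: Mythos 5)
Your proposal is correct; both parts go through. Part \eqref{ray} is essentially the paper's own argument: the paper also locates the witness $h$ at parameter $-\tfrac{1-\lambda}{\lambda}<0$ on the line through $f$ and $g$, reduces to $t>0$ by interchanging $g$ and $h$, and exhibits the explicit coefficient $\mu=\tfrac{\lambda t}{\lambda t+1-\lambda}$; your ``$f$ lies strictly between $\ell(t)$ and a point of $M$ on the other side'' formulation is the same idea with the sign bookkeeping made geometric. Part \eqref{cnvx} differs in execution, though not in substance: the paper writes $f=(1-\beta)f+\beta f$, expands each copy of $f$ via its representation, and then solves a system of four linear equations in the three unknowns $\beta,\lambda,\mu$ to regroup the result as $(1-\lambda)g_{\theta}+\lambda\bigl((1-\mu)h_{1}+\mu h_{2}\bigr)$; you instead solve each witness relation for $g_{i}$, substitute into $g_{\theta}=(1-\theta)g_{1}+\theta g_{2}$, and read off the representation from the identity $a-b_{1}-b_{2}=1$. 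The final witness is identical in both cases (a convex combination of $h_{1}$ and $h_{2}$ with weight $\lambda=1-\tfrac1a<1$), but your substitution route is arguably cleaner, since the normalisation of the coefficients and the bound $\lambda<1$ fall out of one short computation rather than from verifying that the solution of a $4\times 3$ system lies in $[0,1)$; what the paper's version buys in exchange is an explicit interpretation of the new representation as a $\beta$-weighted mixture of the two given ones, with closed-form values of $\beta$, $\lambda$ and $\mu$.
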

\begin{proof}
  We begin by verifying \eqref{ray} for a given $f\in M$ and $g\in P_{f}$. If $g=f$ then \eqref{ray} is obvious. So assume that $g\neq f$. Then, by the definition of $P_{f}$ there exist $h\in M\setminus \set{f}$ and $\lambda\in(0,1)$ such that $f=(1-\lambda)g+\lambda h$. Note that $g-f$ and $h-f$ are non-zero, linearly dependent vectors pointing in opposite directions. Hence the set in the conclusion of \eqref{ray} is unchanged if we write $h-f$ instead of $g-f$. 
  
  Let $t\in \R$ be such that $f+t(g-f)\in M$. We need to show that $f+t(g-f)\in P_{f}$. We may assume that $t>0$; otherwise interchange $g$ and $h$. Then for
  \begin{equation*}
  \mu:=\frac{\lambda t}{\lambda t+1-\lambda}\in (0,1)
  \end{equation*}
  we have that $f=(1-\mu)\br{f+t(g-f)}+\mu h$, showing that $f+t(g-f)\in P_{f}$. This completes the proof of \eqref{ray}.
  
  Next we prove \eqref{cnvx}. Suppose $f\in M$, $g_{1},g_{2}\in P_{f}$ and $\theta\in [0,1)$. Choose $\lambda_{1},\lambda_{2}\in [0,1)$ and $h_{1},h_{2}\in M$ such that
  \begin{equation*}
    f=(1-\lambda_{i})g_{i}+\lambda_{i}h_{i}\qquad \text{for }i=1,2.
  \end{equation*}
  Our aim is now to find a representation of $f$ of the form
  \begin{multline*}
    (1-\lambda)((1-\theta)g_1+\theta g_2)+\lambda((1-\mu)h_1+\mu h_2)=\\(1-\beta)((1-\lambda_{1})g_{1}+\lambda_{1}h_{1})+ \beta((1-\lambda_{2})g_{2}+\lambda_{2}h_{2}),
  \end{multline*}
  with $\lambda\in[0,1)$ and $\mu\in[0,1]$. This comes down to solving the system
  \begin{equation*}
  \begin{cases}
  (1-\lambda)(1-\theta)=(1-\beta)(1-\lambda_{1}),\\
  (1-\lambda)\theta=\beta(1-\lambda_{2}),\\
  \lambda(1-\mu)=(1-\beta)\lambda_{1},\\
  \lambda\mu=\beta\lambda_{2}.
  \end{cases}
  \end{equation*}
  of $4$ linear equations with $3$ unknowns, $\beta$, $\lambda$ and $\mu$. Solving this system, we get
  \begin{equation*}
  \beta=\frac{\theta(1-\lambda_{1})}{1-\br{\theta \lambda_{1}+(1-\theta)\lambda_{2}}},\quad \lambda=(1-\beta)\lambda_{1}+\beta\lambda_{2},\quad \mu=\frac{\beta\lambda_{2}}{(1-\beta)\lambda_{1}+\beta\lambda_{2}},
  \end{equation*}
  each of which belong to $[0,1)$. It follows that 
  \begin{equation*}
  f=(1-\lambda)\br{(1-\theta)g_{1}+\theta g_{2}}+\lambda\br{(1-\mu)h_{1}+\mu h_{2}},
  \end{equation*}
  showing that $(1-\theta)g_{1}+\theta g_{2}\in P_{f}$. Since $g_{1},g_{2}\in P_{f}$ and $\theta\in[0,1]$ were arbitrary, this proves that $P_{f}$ is convex.
\end{proof}
We are now ready to prove Lemma~\ref{lemma:affine_slice}:
\begin{proof}
  Let
  \begin{equation*}
    A_{f}:=f+\Span\set{(g-f)\colon g\in P_{f}}
  \end{equation*}
  and observe that $A_{f}$ is an affine subspace of $Y$. It is clear that $P_{f}\subseteq M\cap A_{f}$. Our task is to verify the reverse inclusion. So let $g\in M\cap A_{f}$. Then there is a minimal $n\in\N$ for which there exist $g_{1},\ldots,g_{n}\in P_{f}$ such that $g\in f+\Span\set{g_{1}-f,\ldots,g_{n}-f}$. The minimality of $n$ ensures that $g_{1}-f,\ldots,g_{n}-f$ are linearly independent vectors in $Y$. In particular, none of these vectors are zero. Therefore, for each $i=1,\ldots,n$ there exist $h_{i}\in M\setminus\set{f}$ and $\lambda_{i}\in (0,1)$ such that $f=(1-\lambda_{i})g_{i}+\lambda_{i}h_{i}$. Observe that for each $i$ the vectors $g_{i}-f$ and $h_{i}-f$ are non-zero, linearly dependent and point in opposite directions. There exist $\alpha_{1},\alpha_{2},\ldots,\alpha_{n}\in \R\setminus\set{0}$ such that
  \begin{equation*}
    g=f+\sum_{i=1}^{n}\alpha_{i}(g_{i}-f)
  \end{equation*}
  We may assume that $\alpha_{i}> 0$ for all $i$. Otherwise interchange some of the $g_{j}$'s with $h_{j}$'s. For each $i=1,\ldots,n$ let 
  \begin{equation*}
    \beta_{i}:=\frac{\alpha_{i}}{\alpha_{1}+\alpha_{2}+\ldots\alpha_{n}}\in (0,1).
  \end{equation*}
  Now consider the vector
  \begin{equation*}
    \widetilde{g}:=f+\frac{1}{n}\sum_{i=1}^{n}\beta_{i}(g_{i}-f)=\frac{1}{n}\sum_{i=1}^{n}\underbrace{\br{(1-\beta_{i})f+\beta_{i}g_{i}}}_{\in P_{f}}.
  \end{equation*}
  Note that the bracketed expression marked above belongs to $P_{f}$ because it is a convex combination of elements $f$ and $g_{i}$ of $P_{f}$, which is a convex set by Lemma~\ref{lemma:ray}\eqref{cnvx}. Thus, the expression on the right hand side is a convex combination of elements of the convex set $P_{f}$, showing that $\widetilde{g}\in P_{f}$. We conclude, using Lemma~\ref{lemma:ray}, that 
  \begin{equation*}
    \set{f+t(\widetilde{g}-f)\colon t\in \R}\cap M\subseteq P_{f}.
  \end{equation*}
  This together with the identity
  \begin{equation*}
    g=f+n(\alpha_{1}+\alpha_{2}+\ldots+\alpha_{n})(\widetilde{g}-f).
  \end{equation*}
  completes the proof.
\end{proof}

\begin{lemma}\label{lemma:M-Pf_convex}
  Let $Y$ be a Banach space, $M\subseteq Y$ be convex and for each $f\in M$ let $P_{f}$ be given by Lemma~\ref{lemma:affine_slice}. Then the following statements hold:
  \begin{enumerate}[(i)]
  \item\label{Pf_Fsigma}  The set $P_{f}$ may be expressed as the countable union
    \begin{align*}
      P_{f}&=\set{f}\cup\bigcup_{q\in\Q\cap (0,1/2)}P_{f,q}, \\ \text{where }P_{f,q}:&=\set{g\in \mc{M}\colon \exists\lambda\in[q,1-q]\,\exists h\in M\,\text{s.t.}f=(1-\lambda)g+\lambda h},\qquad q\in (0,1/2).
    \end{align*}
    Moreover, if $M$ is closed then each set $P_{f,q}$ with $q\in (0,1/2)$ is closed.
  \item\label{M-Pf_conv}  $M\setminus P_{f}$ is convex.
  \end{enumerate}
\end{lemma}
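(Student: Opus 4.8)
The plan is to handle the two parts separately and mostly by elementary manipulation of convex combinations, since the statement is purely about the convex set $M$ and the sets $P_{f}$ from Lemma~\ref{lemma:affine_slice}. For the decomposition in \eqref{Pf_Fsigma} I would check the two inclusions by hand. The inclusion $\{f\}\cup\bigcup_{q}P_{f,q}\subseteq P_{f}$ is immediate, because $f\in P_{f}$ and $[q,1-q]\subseteq[0,1)$ for every $q\in(0,1/2)$, so a representation witnessing $g\in P_{f,q}$ also witnesses $g\in P_{f}$. For the reverse inclusion I would take $g\in P_{f}$ with $g\neq f$ and a representation $f=(1-\lambda)g+\lambda h$ with $\lambda\in[0,1)$, $h\in M$. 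The point to notice is that $\lambda\neq0$ (otherwise $g=f$), so $\lambda\in(0,1)$ is bounded away from both endpoints, and choosing any sufficiently small rational $q\in(0,1/2)$ (concretely $q\le\min\{\lambda,1-\lambda\}$) gives $\lambda\in[q,1-q]$, hence $g\in P_{f,q}$. No reparametrisation of the combination is needed here: a single fixed $\lambda\in(0,1)$ already lands in some $[q,1-q]$.

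For the closedness of $P_{f,q}$ when $M$ is closed I would argue by sequences. Given $g_{n}\in P_{f,q}$ with $g_{n}\to g$, choose witnesses $\lambda_{n}\in[q,1-q]$ and $h_{n}\in M$ with $f=(1-\lambda_{n})g_{n}+\lambda_{n}h_{n}$. Since $[q,1-q]$ is compact, I pass to a subsequence with $\lambda_{n}\to\lambda\in[q,1-q]$; because $\lambda_{n}\ge q>0$, the formula $h_{n}=\lambda_{n}^{-1}\br{f-(1-\lambda_{n})g_{n}}$ causes no blow-up and $h_{n}\to h:=\lambda^{-1}\br{f-(1-\lambda)g}$. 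Closedness of $M$ gives $g,h\in M$, and taking limits yields $f=(1-\lambda)g+\lambda h$, so $g\in P_{f,q}$. Together with the trivially closed singleton $\{f\}$, this exhibits $P_{f}$ as a countable union of closed sets, i.e.\ as $F_{\sigma}$.

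For \eqref{M-Pf_conv} I expect the only real subtlety. Although Lemma~\ref{lemma:affine_slice} gives $P_{f}=M\cap A_{f}$ with $A_{f}$ affine, this is not enough on its own, since the complement of an affine subspace inside a convex set need not be convex. Instead I would argue directly: suppose $g_{1},g_{2}\in M$, $\theta\in(0,1)$ and $g_{\theta}:=(1-\theta)g_{1}+\theta g_{2}\in P_{f}$, and show that then $g_{1}\in P_{f}$. Granting this, whenever $g_{1},g_{2}\in M\setminus P_{f}$ every convex combination of them again avoids $P_{f}$, which is exactly the asserted convexity of $M\setminus P_{f}$. To prove the claim I would write $f=(1-\mu)g_{\theta}+\mu h$ with $\mu\in[0,1)$, $h\in M$, and substitute the expression for $g_{\theta}$; this presents $f$ as a convex combination of $g_{1},g_{2}$ and $h$ in which the weight on $g_{1}$ is $(1-\mu)(1-\theta)>0$. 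Regrouping the remaining mass into a single point $h'\in M$ (a convex combination of $g_{2}$ and $h$, hence in $M$ by convexity) yields $f=(1-\lambda')g_{1}+\lambda'h'$ with $\lambda'=1-(1-\mu)(1-\theta)\in[0,1)$, so $g_{1}\in P_{f}$. The crux is simply recognising that positivity of the weight $(1-\mu)(1-\theta)$ forces $g_{1}$ to participate in a genuine convex representation of $f$.
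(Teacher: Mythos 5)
Your proof is correct and takes essentially the same route as the paper's: the closedness argument (compactness of $[q,1-q]$, the formula $h_n=\lambda_n^{-1}\bigl(f-(1-\lambda_n)g_n\bigr)$ with no blow-up since $\lambda_n\geq q>0$, and closedness of $M$) is identical, and your contrapositive regrouping argument for part (ii) is, up to renaming of parameters, exactly the paper's rearrangement $f=(1-\mu)g_1+\mu\bigl(\tfrac{\mu-\lambda}{\mu}g_2+\tfrac{\lambda}{\mu}h\bigr)$. The only cosmetic difference is that you spell out the two inclusions in the decomposition of $P_f$ (including the observation that $\lambda\neq 0$ forces $g=f$), which the paper dismisses as immediate from the definition.
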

\begin{proof}
  The equation for $P_{f}$ in \eqref{Pf_Fsigma} follows immediately from the definition \eqref{eq:Pf} of $P_{f}$. The only thing in \eqref{Pf_Fsigma} that we need to check is the `Moreover' statement. So assume that $M$ is closed, let $q\in (0,1/2)$ and consider a sequence $(g_{n})_{n\in\N}$ in $P_{f,q}$ which converges in $M$ to a limit $g\in M$. We need to show that $g\in P_{f,q}$. For each $n\in\N$ let $\lambda_{n}\in [q,1-q]$ and $h_{n}\in M$ witness that $g_{n}\in P_{f,q}$. Then by passing to a subsequence of $(g_{n})_{n\in\N}$, we may assume that the limit $\lambda:=\lim_{n\to\infty}\lambda_{n}\in[q,1-q]$ exists. Using the equation $h_{n}=\frac{1}{\lambda_{n}}f-\frac{1-\lambda_{n}}{\lambda_{n}}g_{n}$ and the convergence of both sequences $(\lambda_{n})_{n\in\N}$ and $(g_{n})_{n\in\N}$ to $\lambda\in [q,1-q]$ and $g\in M$ respectively, we deduce that the limit $h:=\lim_{n\to\infty}h_{n}\in M$ exists and satisfies $f=(1-\lambda)g+\lambda h$, demonstrating that $g\in P_{f,q}$.
  
  We now turn our attention to \eqref{M-Pf_conv}. Let $g_{1},g_{2}\in M\setminus P_{f}$ and $\theta\in [0,1]$. We show that $(1-\theta)g_{1}+\theta g_{2}\in M\setminus P_{f}$. Assume the contrary. Then there exist $\lambda\in[0,1)$ and $h\in M$ such that 
  \begin{equation*}
    f=\br{1-\lambda}\br{(1-\theta)g_{1}+\theta g_{2}}+\lambda h.
  \end{equation*}
  We may rearrange the expression on the right hand side to get, for $\mu:=\lambda(1-\theta)+\theta\in [\lambda,1)$, 
  \begin{equation*}
    f=(1-\mu)g_{1}+\mu\br{\frac{\mu-\lambda}{\mu}g_{2}+\frac{\lambda}{\mu}h}.
  \end{equation*}
  Observe that $\frac{\mu-\lambda}{\mu}g_{2}+\frac{\lambda}{\mu}h$ is a convex combination of $g_{2}$ and $h$ and therefore belongs to $M$. Thus, the above representation of $f$ demonstrates that $g_{1}\in P_{f}$, which is the desired contradiction.
\end{proof} 

We are now ready to prove Theorem~\ref{thm:Pf}.
\begin{proof}[Proof of Theorem~\ref{thm:Pf}]
  Part \eqref{M-Pf_convex} is proved by applying Lemmas~\ref{lemma:affine_slice} and \ref{lemma:M-Pf_convex} with $Y$ as the space of continuous mappings $C\to X$ and $M=\mc{M}$. 
  
  We now prove part \eqref{lipf=1}. Let $f\in\mc{M}\setminus \mc{N}$. For each $q\in (0,1/2)$ let the set $P_{f,q}$ be defined as in Lemma~\ref{lemma:M-Pf_convex}\eqref{Pf_Fsigma}, by which it suffices to show that each such set is an upper porous subset of $\mc{M}$.
  
  Fix $g\in P_{f,q}$ and $\varepsilon>0$. Let $\delta=\delta(f,q,\varepsilon)>0$ be a parameter depending only on $f$, $q$ and $\varepsilon$ to be determined later in the proof. By \cite[Lemma~2.4]{bargetz2016sigma} there exist $x_{0},y\in C$ such that $\norm{x_0-y}<\varepsilon$  and 
  \begin{equation*}
    \norm{f(y)-f(x_{0})}>(1-\delta)\norm{y-x_{0}}.
  \end{equation*}
  For $\eta:=\norm{y-x_0}$ we consider the mapping $R_{\eta,x_{0}}\colon X\to X$ defined by
  \begin{equation*}
    R_{\eta,x_{0}}(x)=\begin{cases}
      x-\frac{\eta(x-x_{0})}{\norm{x-x_{0}}} & \text{ if }\norm{x-x_{0}}>\eta,\\
      x_{0} & \text{ otherwise,}
    \end{cases}
  \end{equation*}
  with $\lip(R_{\eta,x_{0}})\leq 1$ given by Medjic \cite[Lemma~4.2]{Medjic2022}. Observe also that $R_{\eta,x_{0}}(x)$ is a convex combination of $x$ and $x_{0}$ for each $x\in X$. Therefore $R_{\eta,x_{0}}|_{C}\colon C\to C$. Let $\wt{g}:=g\circ R_{\eta,x_{0}}\in\mc{M}$ and note that $\norm{\wt{g}(x)-g(x)}\leq\norm{R_{\eta,x_{0}}(x)-x} \leq \eta$ for all $x\in C$. Therefore, $d_{\infty}(\wt{g},g)\leq \eta<\varepsilon$. Moreover, it will be important that the restriction of $\wt{g}$ to the ball $x_0+\eta\mathbb{B}_X$ is constant. In particular we have that $\wt{g}(y)=\wt{g}(x_{0})=g(x_{0})$.
  
  Let $\alpha=\alpha(f,q)$ be a parameter depending only of $f$ and $q$ to be determined later in the proof, let $g'\in B_{M}(\wt{g},\alpha d_{\infty}(\wt{g},g))$ and suppose that there exist $\lambda\in[0,1]$ and $h\in\mc{M}$ such that $f=(1-\lambda)g'+\lambda h$. Then
  \begin{multline*}
    1-\delta<\frac{\norm{f(y)-f(x_{0})}}{\eta}\leq (1-\lambda)\frac{\norm{g'(y)-g'(x_{0})}}{\eta}+\lambda \frac{\norm{h(y)-h(x_{0})}}{\eta}\\
    \leq (1-\lambda)\frac{\norm{\wt{g}(y)-\wt{g}(x_{0})}+2\alpha\eta}{\eta}+\lambda=2(1-\lambda)\alpha+\lambda,
  \end{multline*}
  which rearranges to give
  \begin{equation*}
    \lambda > \frac{1-\delta-2\alpha}{1-2\alpha}=\frac{2-q}{2}>1-q,
  \end{equation*}
  where the equation above comes from setting the values of $\delta$ and $\alpha$ as
  \begin{equation*}
    \delta:=\alpha:=\frac{q}{2(1+q)}.
  \end{equation*}
  We conclude that $B_{M}(\wt{g},\alpha d_{\infty}(\wt{g},g))\cap P_{f,q}=\emptyset$, which establishes the upper porosity of~$P_{f,q}$ at~$g$.
\end{proof}

{\noindent \textbf{Acknowledgements.}} The authors would like to thank Prof.~Johann Langemets for mentioning the notion of expand-contract plastic spaces. The research of C.~Bargetz and K.~Pirk has been supported by the Austrian Science Fund (FWF): P 32523-N.

\vskip10mm
\noindent
Christian Bargetz\\
Universität Innsbruck\\
Department of Mathematics\\
Technikerstraße 13, 6020 Innsbruck, Austria\\
christian.bargetz@uibk.ac.at
\vskip5mm
\noindent
Michael Dymond\\
University of Birmingham\\
School of Mathematics, University of Birmingham, Edgbaston, Birmingham, B15 2TT, United Kingdom\\
m.dymond@bham.ac.uk\\[1mm]
Former address: Mathematisches Institut, Universit\"at Leipzig, PF 10 09 02, 04109 Leipzig, Germany
\vskip5mm
\noindent
Katriin Pirk\\
Universität Innsbruck\\
Department of Mathematics\\
Technikerstraße 13, 6020 Innsbruck, Austria\\

\end{document}